\titlespacing{\section}{10pt}{10pt}{10pt} 
\tikzset{snake it/.style={decorate, decoration=snake}}
\newcommand{\tpmod}[1]{{\@displayfalse\pmod{#1}}}
\newcommand{\Sym}[1]{\mathrm{Sym}({#1})}
\newtheorem{theorem}{Theorem}[section]
\newtheorem{corollary}[theorem]{Corollary}
\newtheorem{lemma}[theorem]{Lemma}
\newtheorem{definition}[theorem]{Definition}
\newtheorem{algorithm}[theorem]{Algorithm}
\newtheorem{example}[theorem]{Example}
\newtheorem{remark}[theorem]{Remark}
\theoremstyle{definition}
\title{\bf{The Deletion Order and Coxeter Groups}}
\author{Robert Nicolaides and Peter Rowley}
\date{}
\begin{document}

\maketitle
\abstract The deletion order of a finitely generated Coxeter group $W$ is a total order on the elements which, as is proved, is a refinement of the Bruhat order. This order is applied in \cite{bruhatElnitskyTilings} to construct Elnitsky tilings for any finite Coxeter group.   Employing the deletion order, a corresponding normal form of an element $w$ of $W$ is defined which is shown to be the same as the normal form of $w$ using right-to-left lexicographic ordering.  Further results on the deletion order are obtained relating to the property of being Artinian and, when $W$ is finite, its interplay with the longest element of $W$.

\section{Introduction}\label{section intro}

The deletion order, which is a total order, has several interesting properties that pair nicely with Coxter groups. In the context of computation of finitely presented groups, it gets mentioned in the \textsc{Gap} user manual \cite{GAP}, and there it is called `the basic wreath order'.  A further mention is seen in  Section 2.1 of Sims \cite{sims_1994}. There appear to be no further manifestations of this order in the wider literature, and, to our knowledge, no investigations into its relationship with Coxeter groups.

Throughout this paper, we assume $W$ is a finitely generated Coxeter group whose fundamental (or simple) reflections are $S = \{s_1, \dots , s_n \}$. For $w$ an element of $W$, we let $\mathcal{R}(w)$ be the set of reduced words for $w$; those words over $S$, evaluating to $w$, using the minimum number of generators to do so. The length of these reduced words for $w$ is denoted by $\ell(w)$. The Bruhat order of a Coxeter group, $<_B$, is a celebrated partial order on $W$ that provides insights into the structure of reduced words of $W$. The definition of $<_B$, as well as some of its properties, are recalled in Section \ref{Deletionand Coxeter}. 

The definition of the deletion order on $W$, denoted by $<_{\Delta}^W$, starts from some chosen total order on $S$ and constructs a total order on the elements of $W$. 
The authors' original motivation for studying the deletion order came from a problem related to Elnitsky's tilings of polygons, related to Coxeter groups of type $\mathrm{A}$, $\mathrm{B}$ and $\mathrm{D}$ (see \cite{elnitsky}). The deletion order was used as part of the jigsaw in the solution for constructing analogous tilings for \emph{all} finite Coxeter groups (see \cite{Etilings},  \cite{bruhatElnitskyTilings}). An algorithm is presented in Section \ref{algorithm} which delivers a labelling of the Cayley graph $\mathrm{Cay}(W, S)$ when $W$ is finite, which we see in Theorem \ref{Theorem min min equal deletion} coincides with the deletion order on $W$. This type of labelling is central to the role of deletion orders in constructing tilings in \cite{bruhatElnitskyTilings}. We give a brief indication here of how this plays out. For a finite Coxeter system $(W, S)$ equipped with a total order $\ll$ on $W$, for $w \in W$, let 
$L_\ll(w) = |\{u \in W \,|\, u \ll w\}| + 1$. Then its induced Cayley embedding is given by $\varphi_\ll: W \hookrightarrow \Sym{|W|}$ where $$(L_\ll(u))\varphi_\ll(w) = L_\ll(uw^{-1}) \; \mbox{for all}  \; u,w \in W.$$  In \cite{bruhatElnitskyTilings}, it is shown that those $\ll$ that are a refinement of the Bruhat order, produce $\varphi_\ll$ that are so-called \emph{strong E-embeddings}. This type of embedding produces the desired Elnitsky tilings. Refinements of the Bruhat order are not difficult to construct, since $<_B$ is ranked by the length function of the Coxeter system, The deletion order, however, is highlighted as a total order which is \emph{not} ranked by length, with a simple construction that interacts nicely with other elements of Coxeter groups. 

As already noted, the deletion order's relationship with Coxeter groups looks to be of independent interest. We highlight two results presented here, by way of evidence. In Definition \ref{Definition NormalForm} we define $\mathrm{NF}_\Delta(w)$ as the minimal element of $\mathcal{R}(w)$ under $<_{\Delta}$.  Let $\mathrm{NF}_{\texttt{RLex}}(w)$ denote the normal form of $w$ using right-to-left lexicographical ordering (see Section \ref{Deletionand Coxeter} for a detailed description).  Then in Theorem \ref{Theorem DeltaNF = RLexNF}  we prove the following.

\begin{theorem}\label{Theorem 1DeltaNF = RLexNF} Suppose that $W$ is a finitely generated Coxeter group.
For all $w \in W$, $$\mathrm{NF}_\Delta(w) = \mathrm{NF}_{\texttt{RLex}}(w).$$
\end{theorem}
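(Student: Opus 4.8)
The plan is to exploit that in a total order the minimum of a finite set is unique, so it suffices to prove that the right-to-left lexicographic normal form $r := \mathrm{NF}_{\texttt{RLex}}(w)$ is itself the $<_\Delta$-minimum of $\mathcal{R}(w)$; this forces $r = \mathrm{NF}_\Delta(w)$. I would argue by induction on $\ell(w)$, the case $w = e$ being trivial since $\mathcal{R}(e) = \{\varepsilon\}$. Write $D(w) = \{s \in S : \ell(ws) < \ell(w)\}$ for the right descent set and let $d = \min_< D(w)$ be its least element under the fixed total order on $S$. The first routine step is to pin down the recursive shape of $r$: a generator $s$ is the final letter of some reduced word for $w$ exactly when $s \in D(w)$, and since $\ell(w) = \ell(wd)+1$, the reduced words of $w$ ending in $d$ are precisely the words $u\,d$ with $u \in \mathcal{R}(wd)$. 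Because right-to-left comparison is dominated by the final letter, $r$ must end in $d$, and stripping this $d$ leaves the right-to-left minimum of $\mathcal{R}(wd)$; hence $r = \mathrm{NF}_{\texttt{RLex}}(wd)\,d$, and by induction $\mathrm{NF}_{\texttt{RLex}}(wd) = \mathrm{NF}_\Delta(wd)$.

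It then remains to show $r \le_\Delta v$ for every $v \in \mathcal{R}(w)$. Each such $v$ ends in some $s \in D(w)$ with $s \ge d$, and I would split into two cases. If $s = d$, write $v = v'd$ with $v' \in \mathcal{R}(wd)$; the inductive hypothesis gives $r' := \mathrm{NF}_\Delta(wd) \le_\Delta v'$, so the claim reduces to the \emph{right-monotonicity} of the deletion order, namely that $a \le_\Delta b$ implies $a\,d \le_\Delta b\,d$ (a standard translation-invariance property of the basic wreath order). If instead $s > d$, I must show directly that $r \le_\Delta v$, i.e. that among reduced words for $w$ a word terminating in the least descent $d$ lies below every word terminating in a strictly larger descent. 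These two facts about $<_\Delta$, together with the recursion above, close the induction.

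The main obstacle is the second case, reconciling the \emph{global} nature of the deletion order (as the basic wreath order, its primary comparison counts occurrences of the largest generator rather than reading positions) with the \emph{positional} right-to-left recursion. For reduced words this comparison is not formal: a word ending in a large descent need not carry more copies of the top generator, so the conclusion must draw on Coxeter-theoretic structure. Here I would invoke the Exchange/Deletion Condition and the fact (Tits) that $\mathcal{R}(w)$ is connected under braid moves, showing that any $v \in \mathcal{R}(w)$ not ending in $d$ admits a braid move that does not increase it in $<_\Delta$ and drives its final letter toward $d$; iterating reaches a word ending in $d$ that is $\le_\Delta v$, after which the $s=d$ case applies. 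An appealing alternative, which would subsume the whole argument, is to prove the sharper statement that $<_\Delta$ and right-to-left lexicographic order in fact \emph{coincide} as orders on the equal-length set $\mathcal{R}(w)$; verifying this directly from the definition of $<_\Delta$ would make the two normal forms trivially equal, and I expect the wreath-order recursion to match the right-to-left recursion precisely because all words in $\mathcal{R}(w)$ share the length $\ell(w)$, which neutralises the occurrence-counting that otherwise distinguishes the two orders.
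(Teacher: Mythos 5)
Your overall architecture matches the paper's proof: induction on $\ell(w)$, the identification $\mathrm{NF}_{\texttt{RLex}}(w)=\mathrm{NF}_{\texttt{RLex}}(wd)\,d$ for the least right descent $d$, and the case $s=d$ handled by right-monotonicity of $<_\Delta$ (which is the paper's Lemma \ref{Lemma append same gen comparison}; it does require proof, but your appeal to it is sound). The genuine gap is exactly where you flag it: the cross-descent case $s>d$. Your plan there --- that every $v\in\mathcal{R}(w)$ not ending in $d$ admits a $<_\Delta$-non-increasing braid move driving its last letter toward $d$ --- is asserted, not proved; neither Tits' connectivity theorem nor the exchange condition yields such a monotone sequence of braid moves, and you give no mechanism for producing one. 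The paper closes this case with a different Coxeter-theoretic input: the subword (lifting) characterisation of the Bruhat order (Lemma 2.2.1 of \cite{COCG}). Since $ws<_B w$ and $\mathrm{NF}_\Delta(wd)\,d$ is a reduced word for $w$, some reduced word $w_s^*$ of $ws$ is a proper subword of $\mathrm{NF}_\Delta(wd)\,d$, so Lemma \ref{LemmaSubwords} gives $\mathrm{NF}_\Delta(ws)\le_\Delta w_s^* <_\Delta \mathrm{NF}_\Delta(wd)\,d$, and symmetrically with $s$ and $d$ exchanged. These inequalities, together with the trivial ones $\mathrm{NF}_\Delta(ws)<_\Delta\mathrm{NF}_\Delta(ws)\,s$ and $\mathrm{NF}_\Delta(wd)<_\Delta\mathrm{NF}_\Delta(wd)\,d$, are precisely the hypotheses of Lemma \ref{Lemma split word on right}, which then says that among the candidates $\mathrm{NF}_\Delta(ws)\,s$, $s$ ranging over the descents, the $<_\Delta$-least is the one with $<_S$-least $s$. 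Without an input of this kind your case $s>d$ remains unproven.

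Your proposed alternative --- that $<_\Delta$ and right-to-left lexicographic order actually \emph{coincide} on $\mathcal{R}(w)$ --- is false, so it cannot rescue the argument. Take $W=\Sym{4}$ (type $\mathrm{A}_3$, $s_1<_S s_2<_S s_3$), let $\omega_0$ be the longest element, and consider its reduced words $u=s_2s_3s_1s_2s_1s_3$ and $v=s_2s_1s_3s_2s_3s_1$ (both evaluate to $\omega_0$ and have length $6=\ell(\omega_0)$). Their $s_3$-deletion sequences are $\widehat{u}_3=[s_2,\,s_1s_2s_1,\,\mathrm{e}]$ and $\widehat{v}_3=[s_2s_1,\,s_2,\,s_1]$; these have equal length and first differ in position $0$, where $s_2<_\Delta s_2s_1$ by Lemma \ref{LemmaSubwords}, so $u<_\Delta v$. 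But $u$ ends in $s_3$ while $v$ ends in $s_1$, so $v<_{\texttt{RLex}} u$. Thus sharing the common length $\ell(w)$ does not ``neutralise'' the occurrence-and-position bookkeeping of the deletion order: the two orders genuinely diverge on $\mathcal{R}(w)$ away from their minimum, and only the equality of the minima --- the theorem itself --- holds. This example also undercuts the intuition behind your braid-move sketch, since a reduced word ending in a strictly larger descent can sit strictly $<_\Delta$-below a reduced word ending in the least descent; any correct proof must therefore compare against the \emph{minimal} word ending in $d$ specifically, which is what the paper's Bruhat-subword argument accomplishes.
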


The tie-in with the Bruhat order is recorded in Corollary \ref{Lemma Bruhat Less than}. Our second result examines the consequences of the deletion order being an Artinian order.  We call $<_\Delta^W$ \textit{Artinian} if for all elements $w \in W$, the set $w^\downarrow := \{ u \in W | u <_\Delta^W w \}$ is finite.  Then Theorem \ref{corollary Artinian compact hyperbolic} gives the following characterization.

\begin{theorem} Suppose that $W$ is a finitely generated irreducible Coxeter group. Then $<_{\Delta}^W$ is Artinian for all choices of total order on $S$ if and only if $W$ is either a finite, affine, or compact hyperbolic group.
\end{theorem}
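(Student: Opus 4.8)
The plan is to reduce the geometric classification to a purely combinatorial condition on maximal parabolic subgroups, and then to exploit the recursive ``top generator'' structure of $<_\Delta^W$. Write $S = \{s_1, \dots, s_n\}$ and, for $J \subseteq S$, let $W_J = \langle J \rangle$ be the standard parabolic subgroup on $J$. The first step is to prove the equivalence: \emph{$W$ is finite, affine, or compact hyperbolic if and only if every maximal standard parabolic $W_{S \setminus \{s\}}$ is finite.} I would prove this through the Coxeter bilinear form $B$, the symmetric $n \times n$ matrix with entries $-\cos(\pi/m_{ij})$, using that finiteness of $W_{S \setminus \{s\}}$ is equivalent to positive-definiteness of the principal submatrix of $B$ obtained by deleting the row and column indexed by $s$. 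If all $n$ of these maximal principal submatrices are positive definite, Cauchy interlacing forces $B$ to have at least $n-1$ positive eigenvalues, so its signature can only be $(n,0)$ (positive definite), positive semidefinite with a one-dimensional radical, or $(n-1,1)$ (Lorentzian); by the standard signature classification these are exactly the finite, affine, and compact hyperbolic (Lann\'er) types. Irreducibility, i.e.\ connectedness of the diagram, is what makes this trichotomy exhaustive and rules out degenerate cases such as a product of two infinite factors.

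For the sufficiency direction I would fix an arbitrary total order on $S$ with largest generator $s_n$ and set $J = S \setminus \{s_n\}$, so that $W_J$ is finite by hypothesis. For $w \in W$ let $N(w)$ denote the number of occurrences of $s_n$ in the normal form $\mathrm{NF}_\Delta(w)$. The property of the deletion order I would invoke is the one built into its recursive (basic wreath) construction: the count $N$ is its leading statistic, so that $N(u) < N(w)$ implies $u <_\Delta^W w$, whence $w^\downarrow \subseteq \{u \in W : N(u) \le N(w)\}$ by totality. It then suffices to show each level set $\{u : N(u) = j\}$ is finite. Deleting the $j$ copies of $s_n$ from the reduced word $\mathrm{NF}_\Delta(u)$ leaves $j+1$ reduced words over $J$, each representing an element of the finite group $W_J$; since $W_J$ is finite it admits only finitely many reduced words in total, so there are at most that number raised to the power $j+1$ possibilities for $\mathrm{NF}_\Delta(u)$. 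As distinct elements have distinct normal forms, $\{u : N(u) \le N(w)\}$ is finite, and therefore so is $w^\downarrow$.

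For the necessity direction I would argue the contrapositive. If $W$ is not finite, affine, or compact hyperbolic, then by the first step some maximal parabolic $W_{S \setminus \{s\}}$ is infinite. Choosing a total order on $S$ in which $s$ is the largest generator, every $u \in W_{S \setminus \{s\}}$ has a reduced word avoiding $s$, so $N(u) = 0 < 1 = N(s)$ and hence $u <_\Delta^W s$. Thus $s^\downarrow \supseteq W_{S \setminus \{s\}}$ is infinite, so $<_\Delta^W$ is not Artinian for this order, and in particular fails to be Artinian for all choices of order on $S$.

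The two genuine pieces of work are the linear algebra of the first step and the structural input about $<_\Delta^W$. In the classification step one must pair the interlacing bound correctly with the signature data distinguishing the affine (corank one) from the Lann\'er (Lorentzian) case. The step I expect to be the main obstacle, however, is pinning down the precise behaviour of $<_\Delta^W$ used above, namely that $N$ is a monotone leading statistic and that its level sets decompose into parabolic blocks over $J$. This must be drawn carefully from the definition of the deletion order itself rather than from its refinement of the Bruhat order, since $<_B$ alone cannot place the long elements of an infinite $W_J$ below the single generator $s$: those elements are Bruhat-incomparable to $s$, so it is exactly the extra, non-length-graded information carried by $<_\Delta^W$ that drives both directions of the argument.
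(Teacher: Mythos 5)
Your proposal is correct, and its deletion-order core coincides with the paper's own argument: your second and third paragraphs are precisely Lemma \ref{Lemma artinian iff finite}, namely that for a fixed ordering with top generator $s_n$ the order $<_\Delta^W$ is Artinian if and only if $W_{S\setminus\{s_n\}}$ is finite, proved in one direction by observing $W_{S\setminus\{s_n\}} \subseteq s_n^{\downarrow}$ (your necessity step) and in the other by bounding the possible $s_n$-deletion sequences of $\mathrm{NF}_\Delta(u)$ for $u \in w^{\downarrow}$ (your level-set count). In fact your counting is marginally more robust than the paper's: you bound by the total number of reduced words over $J$, whereas the paper asserts that each entry of the deletion sequence of a normal form is itself a normal form --- true, but needing a small extra argument via Lemmas \ref{Lemma append same gen comparison} and \ref{Lemma prepend same gen comparison}, which your version sidesteps. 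Where you genuinely diverge is the classification step. The paper passes from ``Artinian for every ordering'' to ``every proper (equivalently, every maximal) standard parabolic is finite'' and then simply cites \cite{humphreys} for the fact that the irreducible Coxeter groups with this property are exactly the finite, affine and compact hyperbolic ones; you propose to reprove this via Cauchy interlacing on the Coxeter form $B$. The interlacing argument correctly yields the signature trichotomy --- positive definite, corank-one positive semidefinite, or $(n-1,1)$ --- but landing the third case on ``compact hyperbolic,'' a notion defined geometrically (in \cite{humphreys}, via the sign of $B$ on the fundamental chamber), still requires the characterization of compact hyperbolic systems as those of signature $(n-1,1)$ whose maximal parabolics are all finite; that characterization is essentially the content of the result the paper cites. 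So your linear algebra reproves part of the citation rather than replacing it: what it buys is a self-contained route from ``all maximal parabolics finite'' to the trichotomy, at the cost of the definitional bookkeeping about hyperbolicity that the citation absorbs. Your closing remark is also well taken and matches the paper's perspective: the Bruhat order alone could not run this argument, since the elements of an infinite $W_{S\setminus\{s\}}$ are Bruhat-incomparable to $s$, and it is exactly the non-length-graded comparison built into $<_\Delta^W$ that puts all of $W_{S\setminus\{s\}}$ below $s$.
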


We now review the other contents of this paper. Section~\ref{deletionorder} defines the deletion order in the general setting of words on a finite alphabet while Section~\ref{auxilliary} establishes several properties of this order. Of these, we have Lemma \ref{LemmaSubwords} which looks at proper subwords and Lemmas \ref{Lemma append same gen comparison}, \ref{Lemma prepend same gen comparison}, \ref{Lemma split word on right} and \ref{lemma split words on the left} give a range of results on how the order and concatenation of words interact. In Section~\ref{Deletionand Coxeter} Coxeter groups enter the picture and, as already mentioned, the interaction between reduced words and the deletion order is examined. Our final section examines the interplay between the longest element of a Coxeter group of type A and B and the labelling of the Cayley graphs given by the successor algorithm in Section~\ref{algorithm}.

We thank the referee for their helpful comments.

\section{The Deletion Order}\label{deletionorder}
Let $\mathcal{A} = \{a_1, \ldots, a_n\}$ be a finite set, our \textit{alphabet}, equipped with a total ordering $<_\mathcal{A}$ implicitly defined so that $a_i<_\mathcal{A} a_j$ if and only if $i<j$. We call $\mathcal{A}^*$ our set of \textit{words} which consists of all finite sequences over $\mathcal{A}$. We will write our words as strings of successive letters without commas: $a_1a_2a_1$ and $a_1a_1$ being words, for example. We also include the empty word (being the unique sequence of length $0$) and denote this by $\mathrm{e}$. Given two words, we allow ourselves to \textit{concatenate} them to form a new word: intuitively this is done by placing one word after another so that concatenating $a_1a_2a_1$ and $a_1a_1$ gives $a_1a_2a_1a_1a_1$ (so order does matter here). If $w = a_{i_1}\ldots a_{i_k}$, then we define $w^{-1}$ to be the word given by the reversed sequence, $w^{-1} := a_{i_k}\ldots a_{i_1}$.

We now add the following terminology. Given some word $w = a_{i_1}\ldots a_{i_k}$ and selecting $a_j \in \mathcal{A}$, we can rewrite $w$ uniquely as the concatenation of the following words $ w = b_0a_jb_1\ldots a_jb_\ell$ where each $b_i$ is some (possibly empty) word that does not have any appearances of $a_j$. We define the \textit{$a_j$ deletion sequence} of $w$ to be the sequence of words consisting of $b_0,\ldots,b_\ell$. We denote this by writing $$\widehat{w}_j =  [b_0,\ldots,b_\ell].$$ In the case of no appearances of $a_j$ in $w$, we simply write $\widehat{w}_j=[w]$. Let $\lambda_j(w)$ be the length of $\widehat{w}_j$ as a sequence. This means that the number of occurrences of $a_j$ in $w$ is equal to $\lambda_j(w)-1$.

The cases that some $b_i$ are empty (that is $b_i = \mathrm{e}$) are covered as follows: 
\begin{enumerate}[$(i)$]
    \item $i=0$ and $a_{i_1}=a_j$;
    \item $i=\ell$ and $a_{i_k}=a_j$; and
    \item $0<i<\ell$ and the $i^{th}$ and $(i+1)^{th}$ appearance of $a_j$ in $w$ (when reading $w$ from left to right) are adjacent.
\end{enumerate}

\begin{example}\label{ExampleDeletionSequences}
As an example we take $\mathcal{A} = \{a_1,a_2,a_3\}$ and compute the deletion sequences of $u = a_1a_2a_3a_1a_2a_2$, $v = a_1a_2a_3a_2a_1 $ and $w = a_3a_3$.
\begin{table}[H]
    \centering
    \begin{tabular}{l l l}
        $\widehat{u}_3=[a_1a_2,a_1a_2a_2]$ & $\widehat{u}_2 = [a_1,a_3a_1,\mathrm{e},\mathrm{e}]$ &$\widehat{u}_1 = [\mathrm{e},a_2a_3,a_2a_2]$ \\
        $\widehat{v}_3=[a_1a_2,a_2a_1]$ & $\widehat{v}_2 = [a_1,a_3,a_1]$ &$\widehat{v}_1 = [\mathrm{e},a_2a_3a_2,\mathrm{e}]$ \\
        $\widehat{w}_3=[\mathrm{e},\mathrm{e},\mathrm{e}]$ & $\widehat{w}_2 = [a_3a_3]$ &$\widehat{w}_1 = [a_3a_3]$ \\
    \end{tabular}
    \caption{The deletion sequences for words $u,v$ and $w$.}
    \label{tab:my_label}
\end{table}

\end{example}

\begin{definition}[The Deletion Order]\label{Definition Deletion order}
Given our alphabet $\mathcal{A} = \{a_1,\ldots, a_n\}$, we define our order inductively on $\mathcal{A}_i = \{a_1,\ldots, a_i\}$ for $i = 1,\ldots, n$. 

For the base case, suppose we have two distinct words $u,v \in \mathcal{A}_1^*$. Necessarily such words are of the form 
    $u = \underset{p \,\,\text{times}}{a_1\ldots a_1}$ and $v = \underset{q \,\,\text{times}}{a_1\ldots a_1}$ for some non-negative integers $p$ and $q$ (allowing implicitly for the appearance of the empty word). We say $u <_\Delta ^1 v$ if and only if $p < q$.
    
For $i > 1$, we define $<_\Delta ^{i}$ on words in $\mathcal{A}^*_i$ so that for distinct words $u$ and $v$, we first compute their $a_i$-deletion sequences:
\begin{align*}
    \widehat{u}_i &= [b_0,\ldots, b_k]\\
    \widehat{v}_i &= [c_0,\ldots, c_\ell].
\end{align*}
If $k \ne \ell$, and $k < \ell$, say, then this immediately implies $u <_\Delta ^i v$, and similarly, $k > \ell$ implies $v <_\Delta ^i u$. Otherwise, when $k = \ell$, we select the least $j$ such that $b_j \ne c_j$ and recursively compare these words by $<_\Delta ^{i-1}$; we say $b_j <_\Delta ^{i-1} c_j$ implies $u <_\Delta ^{i} v $ and conversely, $c_j <_\Delta ^{i-1} b_j$ implies $v <_\Delta ^{i} u$. Note that this is well-defined since, by construction, $b_j$ and $c_j$ are words contained in $\mathcal{A}^*_{i-1}$. 

We then put $<_{\Delta} = <_\Delta ^{n}$, the deletion order on $\mathcal{A}^*$.
\end{definition}

\begin{remark}
\begin{enumerate}[$(i)$]
\item We point out that comparing to see which is the larger integer of $k$ and $\ell$ is equivalent to comparing $\lambda_i(u)$ and $\lambda_i(v)$.
\item In the standard terminology within the realm of word orders, $<_\Delta ^{n}$ is the \textit{short-lexicographic} extension of $<_\Delta ^{n-1}$ and therefore a total order for all $n$ by induction.
\item We suspect that $<_\Delta$ can be characterized in terms of Prim's greedy algorithm (\cite{prims}) on the Cayley graph for the free monoid generated by $\mathcal{A}$ although we do not explore that here.
\end{enumerate}
\end{remark}
\begin{example}\label{ExampleDeletionOrder}
We revisit the words $u = a_1a_2a_3a_1a_2a_2$, $v = a_1a_2a_3a_2a_1 $ and $w = a_3a_3$ over $\mathcal{A} = \{a_1,a_2,a_3\}$ from Example \ref{ExampleDeletionSequences}.

Comparing $u$ and $v$ by $<_\Delta  = <_\Delta ^3$, we first consider the deletion sequences with respect to $a_3$:
\begin{align*}
    \widehat{u}_3 &= [a_1a_2,a_1a_2a_2]\\
    \widehat{v}_3 &= [a_1a_2,a_2a_1].
\end{align*}
These are sequences of the same length and so we isolate the leftmost respective entries in the sequences that differ; the first entry in both sequences is $a_1a_2$ and so we move on to the second entries where we see the different words $b:=a_1a_2a_2$ and $c:=a_2a_1$ for $u$ and $v$ respectively.

We go on to compare $b$ and $c$ by $<_\Delta ^2$. Again, we compute their deletion sequences, this time with respect to $a_2$:
\begin{align*}
    \widehat{b}_2 &= [a_1,\mathrm{e},\mathrm{e}]\\
    \widehat{c}_2 &= [\mathrm{e},a_1].
\end{align*}
In this case, we see that $\widehat{b}_2$ has the longer sequence, hence we determine $c <_\Delta ^2 b$ and therefore $v <_\Delta  u$.

To compare $u$ and $w$ we immediately see that $u <_\Delta  w$ since $w$ has more appearances of $a_3$ compared to $v$. Therefore $v <_\Delta  u <_\Delta  w$.
\end{example}

For convenience, if $w \in \mathcal{A}^*$ and $a_j \in \mathcal{A} = \{a_1,\ldots,a_n\}$ with deletion sequence $\widehat{w}_j = [b_0,\ldots,b_k]$, we set $\widehat{w}_j[i] = b_i$ for all $i = 0,\ldots,k = \lambda_n(w)-1$.

Due to its later importance, for each word $w \in \mathcal{A}^*$, and $k = 1,\ldots,n$, we will define a new word $\delta_k(w)$. To do this, we first define the auxiliary terms $\overline{\delta_k}(w)$ and ${\tau}(w)$. We write $$
\overline{\delta_k}(w) = \begin{dcases} \mathrm{e} & \text{if} \,\, \lambda_k(w)=1\\
\widehat{w}_k[0]a_k \widehat{w}_k[1]a_k\ldots a_k\widehat{w}_k[\lambda_k(w)-1]a_k & \text{otherwise.}  \end{dcases} 
$$ and $\tau(w) = \widehat{w}_k[\lambda_k(w)]$. Now we define $\delta_n(w) = \overline{\delta_{n}}(w)$ and $$\hspace{2cm}
\delta_k(w) =  ({\tau_{n}} \circ \ldots \circ {\tau_{k + 1}} \circ \overline{\delta_{n-k}})(w)$$ for all $k < n$.
Here, $\circ$ denotes function composition evaluated from left to right. This means $\delta_{n}(w)$ is the segment of the word $w$ up to and including its last appearance of $a_n$ when reading from the left. And $\delta_{n-1}(w)$ is the initial segment of the remaining letters to the right of $\delta_{n}(w)$ in $w$, up to and including its last appearance of $a_{n-1}$ and so on; iteratively isolating the last appearance of a given letter from the remaining word. Note that $\delta_i(w) \in \mathcal{A}_i^*$.

\begin{example}\label{Exmaple deltas}
Continuing with our familiar examples of $u = a_1a_2a_3a_1a_2a_2$, $v = a_1a_2a_3a_2a_1 $ and $w = a_3a_3$ in $\mathcal{A} = \{a_1,a_2,a_3\}$ we see that 
\begin{table}[H]
    \centering
    \begin{tabular}{l l l}
    $\delta_3(u) = a_1a_2a_3$ & $\delta_2(u) = a_1a_2a_2$ & $\delta_1(u) = \mathrm{e}$\\
    $\delta_3(v) = a_1a_2a_3$ & $\delta_2(v) = a_2$ & $\delta_1(v) = a_1$ \\ 
    $\delta_3(w) = a_3a_3$ & $\delta_2(w) = \mathrm{e}$ & $\delta_1(w) = \mathrm{e}$ \\    
    \end{tabular}
    \label{LambdaTable}
\end{table}
\end{example}

\section{Auxiliary Lemmas on General Words}\label{auxilliary}

Given $u,v \in \mathcal{A}^*$, we call $u$ a subword of $v$ if it corresponds to a subsequence of $v$ as finite sequences. Intuitively, this means we can obtain $u$ from $v$ by making several \textit{deletions} from $v$. For example, $a_1a_2a_1$ is a subword of $a_3a_1a_1a_2a_1a_3$ by making the deletions highlighted by $\widehat{a_3}\widehat{a_1}a_1a_2a_1\widehat{a_3}$ or $\widehat{a_3}a_1\widehat{a_1}a_2a_1\widehat{a_3}$. This sets the scene for our first lemma.

\begin{lemma}\label{LemmaSubwords}
If $u$ is a (proper) subword of $v$, then $u <_\Delta  v$.
\end{lemma}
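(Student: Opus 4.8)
The plan is to reduce the claim to the case of a single deletion and then induct on the alphabet size. Since $<_\Delta$ is a total order (hence transitive, as noted in the remark following Definition \ref{Definition Deletion order}), and since any proper subword $u$ of $v$ is reached from $v$ by a finite, nonempty sequence of single-letter deletions, it suffices to prove the following: if $u$ is obtained from $v$ by deleting exactly one letter, then $u <_\Delta v$. Chaining the resulting strict inequalities along such a sequence of deletions then yields $u <_\Delta v$ in the general case.

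I would prove the single-deletion statement by induction on $i$, establishing that $u <_\Delta^i v$ whenever $u, v \in \mathcal{A}_i^*$ and $u$ is $v$ with one letter removed. The base case $i = 1$ is immediate: both words are powers of $a_1$, and $u$ is the shorter of the two, so $u <_\Delta^1 v$ directly from the base clause of Definition \ref{Definition Deletion order}. For the inductive step, write the deleted letter as $a_m$ with $m \le i$ and compare the $a_i$-deletion sequences $\widehat{u}_i = [b_0,\ldots,b_k]$ and $\widehat{v}_i = [c_0,\ldots,c_\ell]$.

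The argument then splits according to whether the deleted letter is the top letter $a_i$. If $m = i$, then deleting it removes one occurrence of $a_i$, so $\lambda_i(u) = \lambda_i(v) - 1$ and the deletion sequence of $u$ is strictly shorter than that of $v$; by the first clause of the definition this gives $u <_\Delta^i v$ at once. If $m < i$, then $u$ and $v$ contain the same number of occurrences of $a_i$, so $k = \ell$, and the block structures align: the deleted $a_m$ lies inside a single maximal $a_i$-free block $c_j$, which becomes $b_j$ (namely $c_j$ with one $a_m$ removed), while every other block is unchanged, so $b_t = c_t$ for all $t \ne j$. Hence $j$ is the least index at which the two sequences differ, and $b_j$ is $c_j$ with a single letter deleted. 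Since $b_j, c_j \in \mathcal{A}_{i-1}^*$, the inductive hypothesis gives $b_j <_\Delta^{i-1} c_j$, and the recursive clause of the definition yields $u <_\Delta^i v$.

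The step that requires the most care is the block bookkeeping in the case $m < i$: because no occurrence of $a_i$ is deleted, the occurrences of $a_i$ in $u$ and in $v$ are in order-preserving correspondence, so the maximal $a_i$-free segments match up index by index, exactly one of them shrinks, and the least differing block is precisely the one containing the removed letter. Once this is pinned down, everything else is a direct appeal to the two clauses defining $<_\Delta^i$ together with transitivity of the total order.
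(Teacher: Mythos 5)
Your proof is correct and follows essentially the same route as the paper's: reduce to a single deletion via transitivity, induct on the alphabet size $\mathcal{A}_i$, and split on whether the deleted letter is $a_i$ (shorter deletion sequence) or lies in some block (recurse on that block via $<_\Delta^{i-1}$). Your explicit bookkeeping that the blocks align index by index and that the shrunken block is the least differing entry is a point the paper leaves implicit, but the argument is the same.
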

\begin{proof}
Our strategy is to show that performing any single deletion produces a lesser word in $<_\Delta $, the rest following by transitivity. 
We proceed by induction on the least $i$ such that $v \in \mathcal{A}_i^*$, by showing that if we make any single deletion in $v$, the resulting word, $u$, is lesser in $<_\Delta $. For the case when $v \in A_1^*$, we necessarily have $v = \underbrace{a_1\ldots a_1}_{p \,\,\text{times}}$ and $u = \underbrace{a_1\ldots a_1}_{p-1 \,\,\text{times}}$ for some $p>0$. By definition of $<_\Delta ^1$ we immediately deduce that $u<_\Delta  v$. 

For the induction step, consider $v \in \mathcal{A}_i^*$, with $i>1$, and the consequence of making a single deletion to form the word $u$. Either, the letter we delete is $a_i$ or not.  In the case that it is indeed $a_i$, then $u$ has one less appearance of $a_i$ than $v$ and hence $\lambda_i({u}) = \lambda_i(v)-1 < \lambda_i({v})$. As a direct consequence of the construction of $<_\Delta $, $u <_\Delta ^i v$ in this case, whence $u <_\Delta  v$.

Write $v= b_0a_i\ldots a_ib_k$ as in the deletion sequence $\widehat{v}_i$. If the letter we delete to form $u$ is not $a_i$, then the deletion occurred within some (non-empty) $b_j \in \widehat{v}_i$. Call this resulting new word $\widehat{b_j}$, then 
\begin{align*}
    \widehat{v}_i &= [b_0,\ldots,b_j,\ldots,b_k] \\
    \widehat{u}_i &= [b_0,\ldots,\widehat{b_j},\ldots,b_k]
\end{align*}
Comparing $u$ and $v$ by $<_\Delta ^i$ reduces to comparing $b_j$ and $\widehat{b_j}$ by $<_\Delta ^{i-1}$ whence we are done by induction since $b_j,\widehat{b_j} \in \mathcal{A}_{i-1}^*$.
\end{proof}

For all $w \in \mathcal{A}^*$, let $\tau_n(w) = \widehat{w}_n[\lambda_n(w)]$. Consequently, we have $w = \delta_n(w)\tau_n(w)$. Lemma \ref{Lemma delta dichotomy} shows how this decomposition interacts with the deletion order.

\begin{lemma}\label{Lemma delta dichotomy}
Suppose that $u,v \in \mathcal{A}^*$. Then $u <_\Delta  v$ if and only if either
\begin{enumerate}[$(i)$]
    \item $\delta_n(u) <_\Delta  \delta_n(v)$; or
    \item $\delta_n(u) = \delta_n(v)$ and $\tau_n(u) <_\Delta^{n-1} \tau_n(v)$.
\end{enumerate}
\end{lemma}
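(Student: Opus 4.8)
The plan is to unwind the definition of $<_\Delta = <_\Delta^n$ directly, tracking how the $a_n$-deletion sequences of $u$ and $v$ relate to those of $\delta_n(u),\delta_n(v)$ and to $\tau_n(u),\tau_n(v)$. Write $\widehat{u}_n = [b_0,\ldots,b_k]$ and $\widehat{v}_n = [c_0,\ldots,c_\ell]$. The decomposition $w = \delta_n(w)\tau_n(w)$ identifies $\tau_n(u)$ with the suffix of $u$ lying after its last $a_n$, that is, the final entry $b_k$ of $\widehat{u}_n$, and likewise $\tau_n(v) = c_\ell$. Since $\delta_n(u)$ consists of $u$ up to and including its final $a_n$, it carries exactly the same $k$ occurrences of $a_n$ as $u$, and its deletion sequence is $\widehat{\delta_n(u)}_n = [b_0,\ldots,b_{k-1},\mathrm{e}]$ (read as $[\mathrm{e}]$ when $k=0$); similarly $\widehat{\delta_n(v)}_n = [c_0,\ldots,c_{\ell-1},\mathrm{e}]$. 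In particular $\delta_n(u) = \delta_n(v)$ holds precisely when $k=\ell$ and $b_j = c_j$ for all $j<k$. We may assume $u\neq v$ throughout, as otherwise $u<_\Delta v$, $(i)$ and $(ii)$ are all false.

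First I would dispatch the case $\lambda_n(u)\neq\lambda_n(v)$, equivalently $k\neq\ell$. Here the first clause of Definition \ref{Definition Deletion order} decides the comparison of $u$ and $v$ by the integers $k,\ell$ alone, and it decides $\delta_n(u)$ against $\delta_n(v)$ by these same integers, since their deletion sequences have lengths $k+1$ and $\ell+1$. Hence $u<_\Delta v \iff k<\ell \iff \delta_n(u)<_\Delta\delta_n(v)$, so $(i)$ holds exactly when $u<_\Delta v$; and $(ii)$ is vacuous because $\delta_n(u)\neq\delta_n(v)$. This settles the equivalence when $k\neq\ell$.

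For the remaining case $k=\ell$, the comparison of $u$ and $v$ is resolved by the least index $j$ with $b_j\neq c_j$, comparing $b_j$ and $c_j$ under $<_\Delta^{n-1}$. I would split according to whether this least index is equal to $k$ or strictly less than $k$. If every $j<k$ satisfies $b_j=c_j$, then $\delta_n(u)=\delta_n(v)$, so $(i)$ fails, and (as $u\neq v$) the deciding index is $j=k$, giving $u<_\Delta v \iff b_k<_\Delta^{n-1}c_k \iff \tau_n(u)<_\Delta^{n-1}\tau_n(v)$, which is exactly $(ii)$. If instead some $j<k$ has $b_j\neq c_j$, let $j_0$ be the least such index; then $j_0$ is also the least index at which the sequences $[b_0,\ldots,b_{k-1},\mathrm{e}]$ and $[c_0,\ldots,c_{k-1},\mathrm{e}]$ of $\delta_n(u),\delta_n(v)$ disagree, since these agree on all indices below $j_0$ and both terminate in $\mathrm{e}$. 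Consequently both comparisons reduce to the single comparison of $b_{j_0}$ and $c_{j_0}$ under $<_\Delta^{n-1}$, yielding $u<_\Delta v \iff \delta_n(u)<_\Delta\delta_n(v)$, which is $(i)$; and $(ii)$ fails here since $\delta_n(u)\neq\delta_n(v)$.

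Because in every case I exhibit the truth value of $u<_\Delta v$ as identical to that of the disjunction $(i)$-or-$(ii)$, the stated equivalence follows at once, with no separate converse argument needed. The only point demanding care is the bookkeeping at the boundary index $k$: the decomposition $w=\delta_n(w)\tau_n(w)$ is engineered precisely so that indices $0,\ldots,k-1$ of the deletion sequence live in $\delta_n(w)$ while index $k$ is $\tau_n(w)$, and the main obstacle is to verify that this split interacts correctly with the ``least index of disagreement'' rule, including the degenerate subcase $k=0$ where $\delta_n(u)=\delta_n(v)=\mathrm{e}$ and the claim collapses to the definition of $<_\Delta^{n-1}$ on $u,v\in\mathcal{A}_{n-1}^*$.
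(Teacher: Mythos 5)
Your proof is correct and follows essentially the same route as the paper's: split on whether $\lambda_n(u)=\lambda_n(v)$, and in the equal case compare the least index of disagreement against the final position of the deletion sequence, which is exactly where $\delta_n$ and $\tau_n$ split the word. Your version is slightly more careful than the paper's (explicit truth-value bookkeeping in every case and the degenerate $k=0$ subcase), but it is not a different argument.
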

\begin{proof}
Suppose that $\lambda_n(u) \ne \lambda_n(v)$.
Note that $\lambda_n(\delta_n(u)) = \lambda_n(u)$ and $\lambda_n(\delta_n(v)) = \lambda_n(v)$. So $\lambda_n(u)<\lambda_n(v)$ if and only if $\delta_n(u) <_\Delta  \delta_n(v)$ in this case.

Now assume that $\lambda_n(u) = \lambda_n(v)= \ell$. Writing 
\begin{align*}
    \widehat{u}_n &= [b_0,\ldots,b_{\ell -2},b_{\ell -1}] &&\text{and}\\
    \widehat{v}_n &= [c_0,\ldots,c_{\ell-2},c_{\ell -1}]\\\intertext{ gives }
     \widehat{\delta_n(u)}_n &= [b_0,\ldots,b_{\ell -2},e] &&\text{and}\\
    \widehat{\delta_n(v)}_n &= [c_0,\ldots,c_{\ell-2},e].    
 \end{align*}
 Further suppose, without loss of generality, that $u<_\Delta  v$. Directly from the definition of the deletion order, this is equivalent to the existence of some least $i$ within the range $0 \le i \le \ell-1$ such that $b_j = c_j$ for all $j<i$ but $b_i <_\Delta^{n-1} c_i$. Note that the first $\ell-1$ entries of the pairs $\widehat{u}_n$ and $\widehat{\delta_n(u)}_n$, and $\widehat{v}_n$ and $\widehat{\delta_n(v)}_n$, are respectively equal. So if $i < \ell-1$, then $b_i <_\Delta^{n-1} c_i$ implies that $u <_\Delta^{n} v$ if and only if $\delta_n(u) <_\Delta^{n} \delta_n(v)$. Finally, if $i = \ell -1$, then $\delta_n(u) = \delta_n(v)$ and so, using the definition of the deletion order, $u <_\Delta  w$ exactly when $\tau_n(u) <_\Delta^{n-1} \tau_n(v)$.
\end{proof}

\begin{lemma}\label{Lemma append same gen comparison}
Suppose $u$ and $v$ are distinct words in $\mathcal{A}^*$ and $a \in \mathcal{A}$. Then
\begin{align*}
    ua &<_{\Delta} va\\\intertext{if and only if}
    u &<_{\Delta} v.
\end{align*}
\end{lemma}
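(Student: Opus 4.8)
The plan is to prove a slightly stronger statement by induction on the alphabet size and then specialise: for every $i$, for all distinct words $u,v \in \mathcal{A}_i^*$ and every $a \in \mathcal{A}_i$, one has $ua <_\Delta^i va$ if and only if $u <_\Delta^i v$. The lemma is then the case $i = n$. The base case $i = 1$ is immediate: all words are powers of $a_1$, say $u = a_1^p$ and $v = a_1^q$, and appending $a_1$ replaces $p,q$ by $p+1,q+1$, so $ua_1 <_\Delta^1 va_1 \iff p+1 < q+1 \iff u <_\Delta^1 v$.

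For the inductive step I would split according to whether $a = a_i$ or $a = a_m$ with $m < i$, the point being to track how appending $a$ on the right perturbs the $a_i$-deletion sequence. If $\widehat{u}_i = [b_0,\ldots,b_k]$, then appending $a_i$ gives $\widehat{(ua_i)}_i = [b_0,\ldots,b_k,\mathrm{e}]$, a new trailing empty block, whereas appending $a_m$ with $m < i$ gives $\widehat{(ua_m)}_i = [b_0,\ldots,b_{k-1},b_ka_m]$, changing only the final block. In both cases the effect on $\lambda_i$ is the same for $u$ as for $v$ — an increase by one in the first case, no change in the second — so the length clause in the definition of $<_\Delta^i$ returns the identical verdict for the pair $(ua,va)$ as for $(u,v)$. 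We may therefore assume the two deletion sequences have equal length and pass to the recursive comparison.

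With equal lengths, the comparison is decided by the leftmost block at which the sequences differ. For $a = a_i$ the appended blocks are both $\mathrm{e}$, hence equal, so (since $u \ne v$ forces a difference among the original blocks) the leftmost differing index is unchanged and the recursive comparison at that index is literally the same as for $(u,v)$. For $a = a_m$ with $m < i$ the first $k$ blocks are untouched, so if the leftmost difference already occurs there we are done; the only new situation is $b_j = c_j$ for all $j < k$, which pushes the difference into the final block. Here distinctness of $u$ and $v$, together with equality of the earlier blocks, forces $b_k \ne c_k$; the verdict for $(u,v)$ is then decided by comparing $b_k$ and $c_k$ under $<_\Delta^{i-1}$, while the verdict for $(ua_m,va_m)$ is decided by comparing $b_ka_m$ and $c_ka_m$ under $<_\Delta^{i-1}$. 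Since $b_k,c_k \in \mathcal{A}_{i-1}^*$ and $a_m \in \mathcal{A}_{i-1}$, the inductive hypothesis makes these two verdicts agree, completing the step.

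The step I expect to carry the real content is precisely this last-block case of $a = a_m$, where the statement feeds back into itself over the smaller alphabet $\mathcal{A}_{i-1}$; every other case is bookkeeping about a single appended letter. The one point to watch is to record explicitly that $u \ne v$ and equality of all earlier blocks force $b_k \ne c_k$, so that the inductive hypothesis — which is stated only for distinct words — genuinely applies.
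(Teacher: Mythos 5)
Your proposal is correct and follows essentially the same route as the paper: induction on the size of the alphabet, a case split on whether the appended letter is the top letter $a_n$ (adding a trailing empty block) or a smaller letter (modifying only the final block), the observation that $\lambda_n$ is perturbed identically for both words, and an appeal to the inductive hypothesis exactly when the leftmost difference lands in the final block. Your write-up is in fact slightly more careful than the paper's (explicitly formulating the induction statement over $\mathcal{A}_i$, giving the base case, and noting that distinctness of $u,v$ forces the final blocks to differ so the hypothesis applies), but the underlying argument is the same.
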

\begin{proof}
Let
\begin{align*}
    \widehat{u}_n &= [b_0,\ldots,b_{\ell -2},b_{\ell -1}] \quad \text{and}\\
    \widehat{v}_n &= [c_0,\ldots,c_{k-2},c_{k -1}].
\end{align*}
Then
\begin{align*}
    {\widehat{(ua)}}_n &= 
    \begin{cases} 
    [b_0,\ldots ,b_{\ell-1}a] \text{\,\,\quad if $a \ne a_n$, }\\
    [b_0,\ldots ,b_{\ell-1},e] \text{\quad if $a = a_n$,}\\
    \end{cases} \text{ and }\\
{\widehat{(va)}_n} &=    
    \begin{cases} 
    [c_0,\ldots, c_{\ell-1}a] \text{\,\,\quad if $a \ne a_n$, }\\
    [c_0,\ldots, c_{\ell-1}, e] \text{\quad if $a = a_n$. }\\
    \end{cases}
\end{align*}
Note that $\lambda_n(ua) = \lambda_n(u) + \epsilon$ and $\lambda_n(va) = \lambda_n(v) + \epsilon$ where $\epsilon = 0$ or $1$. So 

$$\lambda_n(u)<\lambda_n(v) \mbox{ if and only if} \; \lambda_n(ua)<\lambda_n(va).$$

It remains to check  the case $\lambda_n(ua) = \lambda_n(va)$.

If $a = a_n$, then we compare ${\widehat{(ua)}}_n =[b_0,\ldots ,b_{\ell-1},e]$ and ${\widehat{(va)}_n}=[c_0,\ldots, c_{\ell-1}, e]$ via $<_\Delta^{n-1}$. But this is evidently equivalent to comparing ${\widehat{u}}_n =[b_0,\ldots ,b_{\ell-1}]$ and ${\widehat{v}_n} =[c_0,\ldots, c_{\ell-1}]$ via $<_\Delta^{n-1}$.

If $a \ne a_n$, then we compare ${\widehat{(ua)}}_n =[b_0,\ldots ,b_{\ell-1}a]$ and ${\widehat{(wa)}_n}=[c_0,\ldots, c_{\ell-1}a]$ via $<_\Delta$. There exists some least $0 \le i < \ell -1$ for which $b_i \ne c_i$. If $i < \ell-1$, then since the first $\ell-2$ entries of ${\widehat{(ua)}}_n$ and ${\widehat{u}}_n$, and ${\widehat{(wa)}}_n$ and ${\widehat{w}}_n$ are respectively equal, $ua<_\Delta wa$ if and only $u<_\Delta w$.  If $i = \ell -1$, then the statement follows by induction on $|\mathcal{A}|$.
\end{proof}

We now give the `left-handed' version of Lemma \ref{Lemma append same gen comparison}, omitting the proof due to it being very similar.
\begin{lemma}\label{Lemma prepend same gen comparison}
Suppose $u$ and $v$ are distinct words in $\mathcal{A}^*$ and $a \in \mathcal{A}$. Then
    $ au <_{\Delta} av $ {if and only if} 
     $u <_{\Delta} v$.
\end{lemma}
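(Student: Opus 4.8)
The plan is to mirror the proof of Lemma~\ref{Lemma append same gen comparison}, handling the prepending of a letter $a$ rather than appending it. The key structural difference is that prepending $a$ to a word affects the \emph{left} end of the deletion sequence, so instead of modifying the last entry $b_{\ell-1}$, the operation will modify the zeroth entry $b_0$. First I would split on whether $a = a_n$ or $a \ne a_n$, since these interact differently with the top-level $a_n$-deletion sequence.

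Concretely, writing $\widehat{u}_n = [b_0,\ldots,b_{\ell-1}]$ and $\widehat{v}_n = [c_0,\ldots,c_{k-1}]$, I would compute that $\widehat{(au)}_n = [a\,b_0, b_1, \ldots, b_{\ell-1}]$ when $a \ne a_n$, and $\widehat{(au)}_n = [\mathrm{e}, b_0, \ldots, b_{\ell-1}]$ when $a = a_n$, with the analogous expressions for $\widehat{(av)}_n$. As in the appended case, one observes that $\lambda_n(au) = \lambda_n(u) + \epsilon$ and $\lambda_n(av) = \lambda_n(v) + \epsilon$ for a common $\epsilon \in \{0,1\}$, so the comparison of sequence lengths is unaffected by prepending $a$, and it suffices to treat the case $\lambda_n(au) = \lambda_n(av)$, i.e. $\ell = k$.

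In that case, when $a = a_n$ the sequences $\widehat{(au)}_n$ and $\widehat{(av)}_n$ both begin with the common entry $\mathrm{e}$, so the least index where they differ is shifted by one relative to $\widehat{u}_n$ and $\widehat{v}_n$ but compares the same pair of words via $<_\Delta^{n-1}$; hence $au <_\Delta av$ iff $u <_\Delta v$. When $a \ne a_n$, the only altered entry is the zeroth, where we compare $a\,b_0$ against $a\,c_0$; if the least differing index $i$ is positive then the entries are literally unchanged and the equivalence is immediate, while if $i = 0$ we reduce to comparing $a\,b_0$ with $a\,c_0$ by $<_\Delta^{n-1}$, which by induction on $|\mathcal{A}|$ is equivalent to comparing $b_0$ with $c_0$.

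The one point requiring slightly more care than in the appended case is precisely this $i = 0$ subcase, because here the recursive call again prepends the \emph{same} letter $a$ to the zeroth entries, so the induction hypothesis being invoked is exactly the statement of this lemma one alphabet-size down; I would make sure the base case $|\mathcal{A}| = 1$ is handled, where prepending $a_1$ to $a_1^p$ and $a_1^q$ preserves the order of $p$ and $q$ by definition of $<_\Delta^1$. I expect this inductive bookkeeping—confirming that the recursive comparison genuinely has smaller alphabet and that the prepended letter lands in the zeroth slot—to be the only real obstacle, and it is entirely parallel to the appended case, which is why the authors omit the details.
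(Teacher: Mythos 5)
Your proposal is correct and is exactly the argument the paper has in mind: it mirrors the proof of Lemma~\ref{Lemma append same gen comparison}, with the prepended letter modifying the zeroth entry of the $a_n$-deletion sequence (as $[a\,b_0,b_1,\ldots,b_{\ell-1}]$ or $[\mathrm{e},b_0,\ldots,b_{\ell-1}]$) instead of the last, and with induction on $|\mathcal{A}|$ invoked precisely in the $i=0$ subcase where the same letter is prepended one alphabet-size down. Your identification of that subcase as the only point needing care, together with the $|\mathcal{A}|=1$ base case, matches why the authors felt safe omitting the details.
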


Together, Lemma \ref{Lemma append same gen comparison} and Lemma \ref{Lemma prepend same gen comparison} show that the deletion order is what is known as a \textit{reductive order} in the parlance of term-rewriting. This means that we can discount the largest prefix and suffix common to both words when comparing in the deletion order.

\begin{lemma}\label{Lemma split word on right}
Let $u,v \in \mathcal{A}^*$ and $a_i,a_j \in \mathcal{A}$ be such that the following inequalities hold: \begin{align*}
    u<_\Delta ua_i, \quad
    v<_\Delta ua_i, \quad
    u<_\Delta va_j, \quad
    v<_\Delta va_j.
\end{align*} Then $ua_i <_\Delta  va_j$ if and only if either
\begin{enumerate}[$(i)$]
    \item $a_i <_{\Delta} a_j$; or
    \item $a_i = a_j$ and  $u <_\Delta  v$.
\end{enumerate}

\end{lemma}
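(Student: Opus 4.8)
The plan is to reduce everything to the two deletion sequences with respect to $a_n$ and then apply the preceding lemmas, especially Lemma~\ref{Lemma delta dichotomy} (the $\delta_n/\tau_n$ dichotomy) together with the append-cancellation Lemma~\ref{Lemma append same gen comparison}. First I would handle the case analysis on $a_i$ versus $a_j$ by splitting according to whether either of these equals the top letter $a_n$, since appending $a_n$ behaves differently from appending a smaller letter (it starts a fresh block in the $a_n$-deletion sequence, whereas appending $a_k$ with $k<n$ merely extends the final block). The four hypotheses $u<_\Delta ua_i$, $v<_\Delta ua_i$, $u<_\Delta va_j$, $v<_\Delta va_j$ are there to guarantee that neither appended letter makes the word \emph{smaller}; I would first record what each hypothesis says concretely. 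By Lemma~\ref{LemmaSubwords}, $u<_\Delta ua_i$ and $v<_\Delta va_j$ are automatic (they are proper subwords), so the real content of the hypotheses is $v<_\Delta ua_i$ and $u<_\Delta va_j$, which I expect to be the conditions that prevent degenerate ``overtaking'' behaviour.

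The core computation is to write down the $a_n$-deletion sequences of $ua_i$ and $va_j$ in terms of those of $u$ and $v$. Writing $\widehat{u}_n=[b_0,\ldots,b_{\ell-1}]$ and $\widehat{v}_n=[c_0,\ldots,c_{m-1}]$, appending $a_i$ gives $\widehat{(ua_i)}_n=[b_0,\ldots,b_{\ell-1}a_i]$ when $a_i\neq a_n$ and $[b_0,\ldots,b_{\ell-1},\mathrm{e}]$ when $a_i=a_n$, and symmetrically for $va_j$. I would then compare these by the definition of $<_\Delta^n$: first the lengths $\lambda_n$, then the leftmost differing block recursively by $<_\Delta^{n-1}$. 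The subcases are: (a) $a_i=a_j=a_n$, where both appended letters create a new empty final block and the comparison reduces cleanly to $\widehat{u}_n$ versus $\widehat{v}_n$, i.e.\ to $u<_\Delta v$ (matching conclusion (ii)); (b) exactly one of $a_i,a_j$ equals $a_n$, which changes $\lambda_n$ by $1$ for that word only, so the length test settles the comparison and I must check it agrees with $a_i<_\Delta a_j$ (conclusion (i)); and (c) both $a_i,a_j\neq a_n$, where $\lambda_n(ua_i)=\lambda_n(u)$ and $\lambda_n(va_j)=\lambda_n(v)$, so the only modification is to the final block: $b_{\ell-1}a_i$ versus $c_{m-1}a_j$ under $<_\Delta^{n-1}$.

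Subcase (c) is where I expect the main obstacle, because there the comparison is genuinely driven by the altered final blocks, and I need the hypotheses $v<_\Delta ua_i$ and $u<_\Delta va_j$ to rule out the possibility that the differing block occurs \emph{before} the last one. Concretely, if the leftmost index $i_0$ at which the blocks of $\widehat{u}_n$ and $\widehat{v}_n$ differ satisfies $i_0<\min(\ell,m)-1$, then appending $a_i,a_j$ does not touch that block and the comparison of $ua_i,va_j$ would coincide with that of $u,v$, independently of $a_i,a_j$ — I must argue that the hypotheses force the decisive difference to land in the final block, so that the value of $a_i$ versus $a_j$ actually matters. The clean way to do this is induction on $|\mathcal{A}|=n$: peel off $a_n$ via Lemma~\ref{Lemma delta dichotomy}, observing that the appended letter lives in $\tau_n$ when $a_i\neq a_n$ (so that $\delta_n(ua_i)=\delta_n(u)$ and $\tau_n(ua_i)=\tau_n(u)a_i$), and then invoke the same statement one alphabet smaller for the $\tau_n$ parts. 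The dichotomy lemma splits ``compare $ua_i$ with $va_j$'' into the $\delta_n$-level comparison (which, in subcase (c), reads $\delta_n(u)$ versus $\delta_n(v)$, unaffected by the appended smaller letters) and, when those agree, the $\tau_n$-level comparison $\tau_n(u)a_i$ versus $\tau_n(v)a_j$ in $\mathcal{A}_{n-1}^*$, to which the inductive hypothesis applies after checking the four hypotheses transfer to $\tau_n(u),\tau_n(v)$. I would close by verifying the base case $n=1$ directly and assembling the three subcases into the stated ``(i) or (ii)'' conclusion.
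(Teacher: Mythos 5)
Your plan follows essentially the same route as the paper: compute the $a_n$-deletion sequences of $ua_i$ and $va_j$, case-split on which appended letters equal $a_n$, and use Lemma~\ref{Lemma delta dichotomy} together with induction on $|\mathcal{A}|$. Your subcases (a) and (c) are sound; in particular the key point you flag in (c) does go through: reading the cross hypotheses $u<_\Delta va_j$ and $v<_\Delta ua_i$ through Lemma~\ref{Lemma delta dichotomy} (with $\delta_n(ua_i)=\delta_n(u)$, $\delta_n(va_j)=\delta_n(v)$) rules out both $\delta_n(u)<_\Delta\delta_n(v)$ and $\delta_n(v)<_\Delta\delta_n(u)$, so totality forces $\delta_n(u)=\delta_n(v)$, after which all four hypotheses transfer to $\tau_n(u),\tau_n(v)$ and the inductive hypothesis applies.

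The genuine gap is subcase (b). You claim that when exactly one letter, say $a_j$, equals $a_n$, ``the length test settles the comparison.'' It does not: the hypotheses only give $\lambda_n(v)\le\lambda_n(u)\le\lambda_n(v)+1$, and in the boundary case $\lambda_n(u)=\lambda_n(v)+1$ the appended letters produce a tie, $\lambda_n(ua_i)=\lambda_n(u)=\lambda_n(v)+1=\lambda_n(va_n)$. This boundary case is fully compatible with all four hypotheses: over $\mathcal{A}=\{a_1,a_2\}$ take $u=a_2$, $v=a_1$, $a_i=a_1$, $a_j=a_2$; then $v<_\Delta ua_i=a_2a_1$ and $u<_\Delta va_j=a_1a_2$ both hold, yet $\lambda_2(a_2a_1)=\lambda_2(a_1a_2)=2$, so the comparison is decided by the blocks, not by the lengths. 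This tie case is exactly where the paper's proof does its real work: it writes $\delta_n(ua_i)=\delta_n(u)$ and $\delta_n(va_n)=va_n$, then uses the hypothesis $u<_\Delta va_n$ together with Lemma~\ref{LemmaSubwords} (since $\delta_n(u)$ is a subword of $u$) to exclude both $va_n<_\Delta\delta_n(u)$ and $va_n=\delta_n(u)$, leaving $\delta_n(ua_i)<_\Delta\delta_n(va_n)$ as the only possibility, whence $ua_i<_\Delta va_n$ by Lemma~\ref{Lemma delta dichotomy}. Some argument of this kind, at the level of $\delta_n$ or of the individual blocks, is needed to close your subcase (b); the count of occurrences of $a_n$ alone cannot do it.
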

\begin{proof}
If $a_i = a_j$, then statement (ii) follows by Lemma \ref{Lemma append same gen comparison}.

Suppose $a_i \ne a_j$. 
Write
\begin{align*}
    \widehat{u}_n &= [b_0,\ldots,b_{\ell -2},b_{\ell -1}] \quad \text{and}\\
    \widehat{v}_n &= [c_0,\ldots,c_{k-2},c_{k -1}].
    \\\intertext{ Consequently, }
    \widehat{(ua_i)}_n &= 
    \begin{cases} 
    [b_0,\ldots,b_{\ell-1}a_i]\text{\quad if $a_i \ne a_n$, }\\
    [b_0,\ldots,b_{\ell-1},e] \text{\quad if $a_i = a_n$, }
    \end{cases} \text{ and }\\
    \widehat{(va_j)}_n &= 
    \begin{cases} 
    [c_0,\ldots,c_{k-1}a_j]\text{\quad if $a_j \ne a_n$, }\\
    [c_0,\ldots,c_{k-1},e] \text{\quad if $a_j = a_n$. }
    \end{cases} 
\end{align*}
Suppose $a_j = a_n$, so $a_i < a_j$.  By Lemma \ref{Lemma delta dichotomy}, comparing $ua_i$ and  $va_n$ in $<^n_\Delta$ is equivalent to first comparing $\delta_n(ua_i)$ and $ \delta_n(va_n)$ by $<^{n}_\Delta$ and then comparing $\tau_n(ua_i) $ and $ \tau_n(va_n)$ by $<^{n-1}_\Delta$. Note that $\delta_n(ua_i)=\delta_n(u)$ and $\delta_n(va_n)=va_n$. This fact presents several contradictions that we tackle case by case.

If $\delta_n(va_n) <^n_\Delta \delta_n(ua_i)$,  then $\delta_n(va_n) <^n_\Delta \delta_n(u)$, whence $va_n <_\Delta u$, a contradiction. 

If $\delta_n(ua_i) = \delta_n(va_n) = va_n$, then either $va_n$ must be a proper subword of $ua_i$, leading to the contradiction $va_n <_\Delta u$ again, by Lemma \ref{LemmaSubwords}. 

Finally, if $\delta_n(ua_i) <^n_\Delta \delta_n(va_n)$, then the result $ua_i<^n_\Delta va_n$ by Lemma \ref{Lemma delta dichotomy} applies directly. 

If neither $a_i$ nor $a_j$ is $a_n$, then the result follows by induction on $|\mathcal{A}|$.
\end{proof}

Now we consider the `left-handed' version of Lemma \ref{Lemma split word on right}.

\begin{definition}\label{definition reverse delta vector}
Let $w = a_{i_1}\ldots a_{i_k}\in \mathcal{A}^*$. We define $$\alpha(w) = [\lambda_n(\delta_{n}(w^{-1})),\lambda_{n-1}(\delta_{n-1}(w^{-1})),\ldots,\lambda_{1}(\delta_{1}(w^{-1}))].$$ 
\end{definition}
So the first entry of $\alpha$ counts the number of appearances of $a_n$ in $w$.  The second entry counts the number of appearances of $a_{n-1}$ in the subword $a_{i_1}\ldots a_{i_j}$ where $a_{i_{j+1}}$ is the first appearance of $a_n$ in $w$ (if such an appearance exists) and so on in this fashion. 

\begin{example} Suppose that $\mathcal{A} = \{a_1,a_2,a_3 \}$, with $u = a_1a_2a_1a_3a_3a_1a_2$ and $v=a_1a_1a_3a_3a_1a_2$. Then $\alpha(u) = [2,1,1]$ and $\alpha(v) = [2,0,2]$.
\end{example}
In the following lemma, we will want to compare the lists produced by $\alpha$ using the usual lexicographic order on integers. We denote this $<_\texttt{Lex}$ and recall the definition of how to use it to compare two lists: compare the leftmost entry in the lists that have different values; we say that the list with the lesser value is lesser with respect to $<_\texttt{Lex}$. 

\begin{lemma}\label{lemma split words on the left}
Let $u,v \in \mathcal{A}^*$ 
and $a_i, a_j \in \mathcal{A}$ be such that $a_i <_\mathcal{A} a_j$ and the following inequalities hold: 
\begin{align*}
    u <_\Delta a_iu, \quad
    v <_\Delta a_iu, \quad
    u <_\Delta a_jv, \quad
    v <_\Delta a_jv.
\end{align*} Then the following are equivalent: 
\begin{enumerate}[(i)]
    \item $a_iu <_\Delta  a_jv$;
    \item $\lambda_j(\delta_{j}((a_iu)^{-1}) = \lambda_j(\delta_{j}((a_jv)^{-1})) - 1;$
    \item $\alpha(a_iu) <_\texttt{Lex} \alpha(a_jv)$.

\end{enumerate} 

\end{lemma}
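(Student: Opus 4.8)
The plan is to induct on $n=|\mathcal{A}|$, splitting on whether the larger prepended letter $a_j$ is the top letter $a_n$ or not. First I would discard the two automatic hypotheses: $u<_\Delta a_iu$ and $v<_\Delta a_jv$ hold for free by Lemma \ref{LemmaSubwords}, so the substantive content is $v<_\Delta a_iu$ and $u<_\Delta a_jv$. A preliminary computation I would record once and reuse is the effect of prepending on the list $\alpha$: prepending $a_i$ leaves every entry of $\alpha$ above level $i$ unchanged, increases the level-$i$ entry by one, and zeroes out everything below level $i$; likewise for $a_j$. This rephrases conditions (ii) and (iii) purely in terms of $\alpha(u)$ and $\alpha(v)$ — in particular (ii) becomes the assertion that $u$ and $v$ have equal level-$j$ entries — and it is the bookkeeping device linking (ii) and (iii).

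For the base situation $j=n$, prepending $a_n$ raises $\lambda_n$ by one while prepending $a_i$ ($i<n$) does not, so the two cross-hypotheses force $\lambda_n(v)\le\lambda_n(u)\le\lambda_n(v)+1$ (comparing $a_n$-counts via Lemma \ref{Lemma delta dichotomy}). I would then settle the two possibilities directly from the $a_n$-deletion sequences: if $\lambda_n(u)=\lambda_n(v)$ then $\lambda_n(a_iu)<\lambda_n(a_nv)$ forces $a_iu<_\Delta a_nv$, whereas if $\lambda_n(u)=\lambda_n(v)+1$ the sequence $\widehat{(a_nv)}_n$ acquires an empty leading block $\mathrm{e}$, which is $<_\Delta$-minimal, forcing $a_nv<_\Delta a_iu$. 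Matching these against the $\alpha$-reformulation shows (i), (ii), (iii) share a truth value in each possibility.

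For $j<n$ the same level-$n$ argument now forces $\lambda_n(u)=\lambda_n(v)$. Writing $b_0,c_0$ for the prefixes of $u,v$ before their first $a_n$, I would use Lemma \ref{Lemma delta dichotomy} together with the fact that $a_i\ne a_j$ — so the leading blocks $a_ib_0$ and $a_jc_0$ of the relevant $a_n$-deletion sequences always differ — to collapse (i) to the single comparison $a_ib_0<_\Delta^{n-1}a_jc_0$ in $\mathcal{A}_{n-1}$; the $\alpha$-computation collapses (ii) and (iii) to their $\mathcal{A}_{n-1}$-analogues for $a_ib_0,a_jc_0$. To run the induction I must transfer the hypotheses to $b_0,c_0$, and here is the crux: inspecting the first block at which $\widehat{v}_n$ differs from $\widehat{(a_iu)}_n$, and $\widehat{u}_n$ from $\widehat{(a_jv)}_n$, one checks that these two first-difference indices cannot both be positive (that would force $c_0=a_ia_jc_0$). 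If both are $0$, the cross-hypotheses descend verbatim to $c_0<_\Delta^{n-1}a_ib_0$ and $b_0<_\Delta^{n-1}a_jc_0$, and the inductive hypothesis applies. If exactly one is $0$ the hypotheses do \emph{not} transfer, but then one of $b_0=a_jc_0$ or $c_0=a_ib_0$ holds, making one of $a_ib_0,a_jc_0$ a proper subword of the other; Lemma \ref{LemmaSubwords} settles (i) outright, and the explicit $\alpha$-profiles settle (ii) and (iii), all three again agreeing.

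I expect the boundary case — where the hypotheses fail to descend and one word becomes a subword of the other — to be the main obstacle, since it is precisely where the clean inductive reduction breaks and one must instead verify all three conditions by hand from the $\alpha$-profile formulas. Aligning the $j=n$ base analysis, and reconciling the off-by-one between the $\lambda$-valued definition of $\alpha$ and its ``number of occurrences'' reading with the shape of condition (ii), is the other fiddly point I would watch.
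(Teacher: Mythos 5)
Your proposal is correct, and its skeleton is the same as the paper's: induction on $|\mathcal{A}|$ with a case split on whether $a_j=a_n$, a base case settled by comparing $a_n$-counts together with the empty leading block of $\widehat{(a_nv)}_n$, and an inductive step that reduces all three conditions to the leading blocks $a_ib_0$ and $a_jc_0$ of the $a_n$-deletion sequences. Two differences are worth recording. Organizationally, you prove the three conditions equivalent simultaneously, using the prepending rule for $\alpha$ (entries above level $i$ fixed, level $i$ bumped by one, everything below reset to the minimum) as a single bookkeeping device, whereas the paper runs the cycle (i)$\Rightarrow$(ii)$\Rightarrow$(iii)$\Rightarrow$(i) and redoes the computation in each leg. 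More substantively, your treatment of the inductive step is more careful than the paper's. When the paper reaches the comparison of $\widehat{(a_iu)}_n[0]$ with $\widehat{(a_jv)}_n[0]$, it simply asserts that ``the result follows by induction on $|\mathcal{A}|$'', without checking that the lemma's cross-hypotheses descend to the leading blocks --- and they need not: precisely in your boundary case $c_0=a_ib_0$ (or $b_0=a_jc_0$), the descended cross-inequality degenerates to an equality, so the inductive hypothesis cannot be invoked even though the conclusion remains true. Your observation that the two first-difference indices cannot both be positive (else $c_0=a_ia_jc_0$), together with the separate subword argument via Lemma \ref{LemmaSubwords} and the explicit $\alpha$-profiles when exactly one index is positive, is exactly what is needed to make the induction exhaustive; in this respect your proof closes a genuine gap in the paper's own argument.
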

\begin{proof}
    We start with $(i)$ implies $(ii)$.
    Supposing that $a_iu <_\Delta  a_jv$, we walk through the procedure of comparing the words step-by-step. We start by comparing $\lambda_n(a_iu)$ and $ \lambda_n(a_jv)$. Now there are two cases to consider: $j = n$ or $j < n$.

If $j = n$, we know $\lambda_n(\delta_n(u^{-1})) = \lambda_n(u) = \lambda_n(u) = \lambda_n(a_iu)$ and $\lambda_n(v) = \lambda_n(\delta_n(v^{-1})) = \lambda_n(a_jv) - 1$. The constraints from our hypothesis, $v <_\Delta a_iu$ and $u <_\Delta a_jv$, further imply that $\lambda_n(v) \le \lambda_n(a_iu)$ and $\lambda_n(u) \le \lambda_n(a_jv)$. From this, we can deduce  $\lambda_n(\delta_n(v^{-1})) \le \lambda_n(\delta_n(u^{-1})) \le \lambda_n(\delta_n(v^{-1})) + 1$. But notice $\lambda_n(\delta_n(u^{-1})) = \lambda_n(\delta_n(v^{-1})) + 1$ would contradict $u <_\Delta a_jv$, since the next step in determining $a_iu <_{\Delta} a_jv$ would be to compare $\widehat{a_iu}_n[0]$ (a non-empty word since it contains $a_i$) to $\widehat{a_jv}_n[0]$ (the empty word). Hence $\lambda_n(\delta_n(u^{-1})) = \lambda_n(\delta_n(v^{-1})) = \lambda_n(\delta_n((a_jv)^{-1})) - 1$

        If $j < n$, then $\lambda_n(u) = \lambda_n(v) = \lambda_n(a_iu) = \lambda_n(a_jv)$, hence we must now compare $\widehat{a_iu}_n[0]$ and $\widehat{a_jv}_n[0]$ via $<_{\Delta_{n-1}}$, where the result follows by induction on $|\mathcal{A}|$.

    \par Next, we prove $(ii)$ implies $(iii)$. Again, we first consider the case that $a_j = a_n$. In this case, to determine $\alpha(a_iu) <_\texttt{Lex} \alpha(a_jv)$, we start by comparing their leftmost entries, which are $\lambda_j(\delta_{j}((a_iu)^{-1})$ and $ \lambda_j(\delta_{j}((a_jv)^{-1}))$ respectively. But since $\lambda_j(\delta_{j}((a_iu)^{-1}) = \lambda_j(\delta_{j}((a_jv)^{-1})) - 1$, the result follows. 
    
    In the case that $a_j < a_n$, for each $j < k \le n$, we must have $\lambda_k(\delta(u^{-1})) = \lambda_k(\delta((a_iu)^{-1}))$ and $\lambda_k(\delta(v^{-1})) = \lambda_k(\delta((a_jv)^{-1}))$. Our hypothesis that $v <_\Delta a_iu$ and $u <_\Delta a_jv$, adds to our knowledge that $\lambda_k(\delta(v^{-1})) = \lambda_k(\delta((a_iu)^{-1}))$ and $\lambda_k(\delta(u^{-1})) = \lambda_k(\delta((a_jv)^{-1}))$. Hence, when comparing $\alpha(a_iu)$ and $ \alpha(a_jv)$ by $<_\texttt{Lex}$, the first $k$ elements to the left are equal. The next comparison shows us that $\lambda_j(\delta_{j}((a_iu)^{-1}) = \lambda_j(\delta_{j}((a_jv)^{-1})) - 1$, whence the result follows.

    \par Finally we prove $(iii)$ implies $(i)$. First, we consider the case that $a_j = a_n$. We perform the step-by-step comparison of $a_iu$ and  $a_jv$ by $<_\Delta$ given that $\alpha(a_iu) <_\texttt{Lex} \alpha(a_jv)$. We compare $\lambda_n(\delta_{j}((a_iu)^{-1}) = \lambda_n(a_iu)$ and $ \lambda_n(\delta_{j}((a_jv)^{-1})) = \lambda_n(a_jv)$. We consider the three cases of how $\lambda_n(a_iu)$ and $ \lambda_n(a_jv)$ compare. If $\lambda_n(a_iu) > \lambda_n(a_jv)$ this would contradict $\alpha(a_iu) <_\texttt{Lex} \alpha(a_jv)$. If $\lambda_n(a_iu) < \lambda_n(a_jv)$ then $a_iu <_\Delta a_jv$. Next, if $\lambda_n(a_iu) = \lambda_n(a_jv)$, then 
    we next compare $\widehat{a_iu}_n[0] = \delta_{n-1}((a_iu)^{-1})$ and $\widehat{a_jv}_n[0] = \delta_{j}((a_jv)^{-1})$ via $<_{\Delta_{n-1}}$. But  $\delta_{j}((a_jv)^{-1})$ is the empty word, whereas $\delta_{n-1}((a_iu)^{-1}$ is not

    If $a_j < a_n$, then we can deduce given our hypotheses that $\lambda_n(a_iu) = \lambda_n(a_jv)$ whence the result follows by induction on $|\mathcal{A}|$.
\end{proof}

\section{Deletion order and Coxeter groups}\label{Deletionand Coxeter}
Let $(W, S)$ be a finitely generated Coxeter group of rank $n$. Also we assume that $S=\{s_1,\ldots,s_n\}$ with $<_S$ an order on $S$ implicitly given by $s_i <_S s_j$ if and only if $i < j$.  For $J$  a subset of $S$, $W_J$ denotes the standard parabolic subgroup $\langle J \rangle$.
With this setup, we show how one can form the deletion order for words over the alphabet $S$.

\begin{definition}\label{Definition NormalForm}
Let $w \in W$ and recall $\mathcal{R}(w)$ denotes the set of reduced words of $w$.

We define the normal form of $w$ with respect to $<_\Delta$ to be that word in $\mathcal{R}(w)$ which is minimal with respect to the deletion order over $S$. We denote this word by $\mathrm{NF}_\Delta(w)$.
\end{definition}
This normal form is well defined since $\mathcal{R}(w)$ is always finite (even if $W$ were not) and the deletion order is a total order.

Now we introduce a total ordering on the elements of $W$.

\begin{definition}\label{Definition Deletion Order on W}
For distinct $w_1,w_2 \in W$, we say $w_1 <_\Delta^W w_2$ whenever the normal form of $w_1$ is less than that of $w_2$ in the deletion order. That is, $$w_1 <_\Delta^W w_2 \,\,\,\text{if and only if}\,\,\, \mathrm{NF}_\Delta(w_1) <_\Delta  \mathrm{NF}_\Delta(w_2). $$  
\end{definition}
    There are a variety of normal forms which appear in the literature. For each $w \in W$ we define $\mathrm{NF}_{\texttt{LLex}}(w)$ to be its lexicographically least reduced word with respect to $<_S$. This is computed by respectively comparing their corresponding letters (in this case  of $S$) of reduced words of $w$ from left to right; the word with the first occurrence of the larger letter (with respect to $<_S$) produces the larger word. Similarly, the reverse lexicographic order is computed identically but in a right-to-left fashion. We denote its normal form by $\mathrm{NF}_{\texttt{RLex}}(w)$. 
\begin{theorem}\label{Theorem DeltaNF = RLexNF}
For all $w \in W$, $$\mathrm{NF}_\Delta(w) = \mathrm{NF}_{\texttt{RLex}}(w).$$
\end{theorem}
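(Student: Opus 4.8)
The plan is to prove the equivalent recursive statement that both normal forms strip off the \emph{same} last letter, and then induct on $\ell(w)$. Let $s_d = \min D_R(w)$ be the smallest right descent of $w$ (with respect to $<_S$), where $D_R(w) = \{s \in S : \ell(ws) < \ell(w)\}$. I will show separately that
\[
\mathrm{NF}_{\texttt{RLex}}(w) = \mathrm{NF}_{\texttt{RLex}}(ws_d)\,s_d
\qquad\text{and}\qquad
\mathrm{NF}_{\Delta}(w) = \mathrm{NF}_{\Delta}(ws_d)\,s_d .
\]
Granting both, the induction hypothesis $\mathrm{NF}_{\Delta}(ws_d) = \mathrm{NF}_{\texttt{RLex}}(ws_d)$ (valid since $\ell(ws_d)<\ell(w)$) immediately gives $\mathrm{NF}_{\Delta}(w) = \mathrm{NF}_{\texttt{RLex}}(w)$; the base case $w=\mathrm{e}$ is trivial since both normal forms are the empty word.

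The $\texttt{RLex}$ recursion is the easy half. Since the last letter of a reduced word of $w$ ranges exactly over $D_R(w)$, and $<_{\texttt{RLex}}$ compares right-to-left, the $\texttt{RLex}$-least reduced word must first minimise its last letter, forcing it to be $s_d$; the reduced words of $w$ ending in $s_d$ are precisely $\{p\,s_d : p \in \mathcal{R}(ws_d)\}$, and minimising the remaining (leftward) positions in $<_{\texttt{RLex}}$ is exactly minimising $p$, giving $\mathrm{NF}_{\texttt{RLex}}(ws_d)$. This is essentially the definition of $<_{\texttt{RLex}}$ together with the standard description of right descents.

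The $\Delta$ recursion is where the work lies, and it splits into two claims. Claim (b), that among reduced words ending in $s_d$ the $<_\Delta$-least is $\mathrm{NF}_{\Delta}(ws_d)\,s_d$, is immediate from Lemma~\ref{Lemma append same gen comparison}. Claim (a), that $m:=\mathrm{NF}_{\Delta}(w)$ actually ends in $s_d$, I would prove by contradiction: suppose $m = q\,c$ with $c \in D_R(w)$ and $c \neq s_d$, so that $s_d <_{\mathcal A} c$. Put $r := \mathrm{NF}_{\Delta}(ws_d)\,s_d \in \mathcal{R}(w)$ and $p := \mathrm{NF}_{\Delta}(ws_d)$, and compare $r = p\,s_d$ with $m = q\,c$ using Lemma~\ref{Lemma split word on right}. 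Since $s_d <_{\mathcal A} c$, case $(i)$ of that lemma yields $r <_\Delta m$, contradicting the minimality of $m$ over $\mathcal{R}(w)$. The hard part will be verifying the four hypotheses of Lemma~\ref{Lemma split word on right}; two of them ($p <_\Delta p\,s_d$ and $q <_\Delta q\,c$) are instances of the subword Lemma~\ref{LemmaSubwords}, but the two cross-conditions $q <_\Delta r$ and $p <_\Delta m$ are the genuine obstacle, since $<_\Delta$ is not ranked by length and these compare reduced words of different elements.

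The resolution of the obstacle is the crux of the argument. For $q <_\Delta r$: because $q$ is a proper subword of $m=q\,c$, Lemma~\ref{LemmaSubwords} gives $q <_\Delta m$, and minimality of $m$ over $\mathcal{R}(w)$ gives $m \le_\Delta r$, so $q <_\Delta m \le_\Delta r$ — that is, the minimality we are trying to contradict is exactly what supplies this inequality. For $p <_\Delta m$: by the Strong Exchange Property (a recalled Bruhat-order fact, using $ws_d <_B w$), the word $m$ admits a subexpression, obtained by deleting a single letter, that is a reduced word of $ws_d$; by Lemma~\ref{LemmaSubwords} this proper subword is $<_\Delta m$, and since $p = \mathrm{NF}_\Delta(ws_d)$ is the $<_\Delta$-least element of $\mathcal{R}(ws_d)$ we get $p \le_\Delta (\text{that subword}) <_\Delta m$. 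With all four hypotheses confirmed, Claim (a) follows, Claim (b) finishes the $\Delta$ recursion, and the induction closes. I expect the only points needing care in the write-up to be the bookkeeping of which words are reduced (e.g.\ that $r \in \mathcal{R}(w)$ because $s_d \in D_R(w)$) and the precise citation of the Strong Exchange Property from the recalled Bruhat material.
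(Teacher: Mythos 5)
Your proposal is correct and follows essentially the same route as the paper's proof: induction on $\ell(w)$, Lemma \ref{Lemma append same gen comparison} to identify the least reduced word ending in a given descent as $\mathrm{NF}_\Delta(ws)s$, and Lemma \ref{Lemma split word on right} --- with its four hypotheses supplied by Lemma \ref{LemmaSubwords} and the Bruhat subword property --- to force the last letter of $\mathrm{NF}_\Delta(w)$ to be the $<_S$-least right descent. The only cosmetic difference is that the paper compares the candidates $\mathrm{NF}_\Delta(ws)s$ and $\mathrm{NF}_\Delta(wr)r$ pairwise for distinct descents $s,r$, obtaining both cross-hypotheses symmetrically from the subword characterisation of the Bruhat order, whereas you argue by contradiction against $m=\mathrm{NF}_\Delta(w)$ itself and obtain one cross-hypothesis from the very minimality being contradicted; both verifications are sound.
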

\begin{proof}
We proceed by induction on $\ell(w)$. 

The base case concerns the identity element, $1$. Since the only reduced word for the identity is the empty word, necessarily we have $\mathrm{NF}_\Delta(1) = \mathrm{NF}_{\texttt{RLex}}(1)$.

Let $k\ge 0$, then for the induction step, we suppose our statement holds for all $u \in W$ of length at most $k$. Let $w \in W$ be such that $\ell(w) = k+1$. Now we consider each $s \in S$, such that $\ell(ws) = \ell(w)-1$ (if no such $s$ exists, then $w = 1$). Recall that such an $s$ exists if and only if $s$ may appear as the rightmost generator in some reduced word for $w$, by Lemma 2.4.3 of  \cite{COCG}. 

 Now let $w = s_{i_1}\ldots s_{i_{k}}s$ and $w = s_{j_1}\ldots s_{j_{k}}s$ be two reduced words for $w$, ending in $s$. By Lemma \ref{Lemma append same gen comparison}, $ s_{i_1}\ldots s_{i_{k}}s <_\Delta  s_{j_1}\ldots s_{j_{k}}s$ if and only if $ s_{i_1}\ldots s_{i_{k}} <_\Delta  s_{j_1}\ldots s_{j_{k}}$. Hence, the least reduced word for $w$ with $s$ as its right-most generator must be $\mathrm{NF}_\Delta(ws) = \mathrm{NF}_\texttt{Rlex}(ws)$. 

Now suppose we have distinct $s,r \in S$ such that $\ell(ws) = \ell(wr) = \ell(w)-1$. We compare the least words ending in each of these generators with respect to $<_\Delta$. By Lemma 2.2.1 of \cite{COCG}, we know that there are reduced words for $ws$ and $wr$ that are respectively subwords of the reduced expressions $\mathrm{NF}_\Delta(wr)r$ and $\mathrm{NF}_\Delta(ws)s$. Call these words for $w$, $w_s^*$ and $w_r^*$, say. Now Lemma \ref{LemmaSubwords} tells us that $\mathrm{NF}_\Delta(ws) \le_\Delta w_s^* <_\Delta  \mathrm{NF}_\Delta(wr)r$ and $\mathrm{NF}_\Delta(wr) \le_\Delta w_r^* <_\Delta  \mathrm{NF}_\Delta(ws)s$. More trivially, we also have $\mathrm{NF}_\Delta(ws) <_\Delta  \mathrm{NF}_\Delta(ws)s$ and $\mathrm{NF}_\Delta(wr) <_\Delta  \mathrm{NF}_\Delta(wr)r$. So we can apply Lemma \ref{Lemma split word on right}, whence $\mathrm{NF}_\Delta(wr)r <_\Delta  \mathrm{NF}_\Delta(ws)s$ if and only if $r<s$. Therefore $\mathrm{NF}_\Delta(w)$ is $\mathrm{NF}_\Delta(wr)r$ for the least possible such $r \in S$.
\end{proof}

A key property of Coxeter groups is that for all $w \in W$, $w$ can be uniquely decomposed into the form
$w = w_n\ldots w_1$ where each $w_i$ is a minimal (left) coset representative of $ W_{\{s_1,\ldots,s_{i-1}\}}$ in $ W_{\{s_1,\ldots,s_{i-1},s_i\}}$. Equivalently,  each $w_i$ is either the identity element, or is some element of $W_{\{s_1, \ldots, s_i\}}$ for which all of its reduced words end in $s_i$. Moreover,
$\mathrm{NF}_\texttt{Rlex}(w) = \mathrm{NF}_\texttt{Rlex}(w_n)\mathrm{NF}_\texttt{Rlex}(w_{n-1})\ldots \mathrm{NF}_\texttt{Rlex}(w_{1})$. See Proposition 3.4.2 of \cite{COCG} for details, noting that here we are considering the \textit{reverse} lexicographic order. Thus, using the language of Section 3,  $\delta_n(\mathrm{NF}_\texttt{Rlex}(w)) = \mathrm{NF}_\texttt{Rlex}(w_n)$ and $\tau_n(\mathrm{NF}_\texttt{Rlex}(w)) = \mathrm{NF}_\texttt{Rlex}(w_{n-1})\ldots \mathrm{NF}_\texttt{Rlex}(w_{1})$. We abuse our notation to write $\delta_i(w) = w_i$ in this context. 

\begin{corollary}\label{corollary Heads and Tails Cox general}
For all $u,v \in W$,
and $j = 1,\ldots, n$, 
$ u <_\Delta^W v$ if and only if
\begin{enumerate}[$(i)$]
    \item $u_n <_\Delta^W  v_n$; or
    \item $u_{n} = v_n$ and $u_{j-1}\ldots,u_{1} <_\Delta  v_{j-1}\ldots,v_{1}$.
\end{enumerate}
\end{corollary}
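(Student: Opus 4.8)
The plan is to reduce the statement about the ordering on group elements $<_\Delta^W$ directly to the already-established structural result about the deletion order on words, namely Lemma \ref{Lemma delta dichotomy}, by invoking Theorem \ref{Theorem DeltaNF = RLexNF} and the parabolic decomposition recalled just before the corollary. The crucial bridge is the identification $\mathrm{NF}_\Delta(w) = \mathrm{NF}_\texttt{Rlex}(w)$, which lets me pass from the abstract comparison of $u$ and $v$ in $W$ to a concrete comparison of their normal-form words in $\mathcal{A}^* = S^*$. By definition of $<_\Delta^W$, we have $u <_\Delta^W v$ if and only if $\mathrm{NF}_\Delta(u) <_\Delta \mathrm{NF}_\Delta(v)$, so everything comes down to analysing the word-level inequality.

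First I would set $\mathrm{NF}_\Delta(u)$ and $\mathrm{NF}_\Delta(v)$ and apply Lemma \ref{Lemma delta dichotomy} to each, which says that $\mathrm{NF}_\Delta(u) <_\Delta \mathrm{NF}_\Delta(v)$ holds exactly when either $\delta_n(\mathrm{NF}_\Delta(u)) <_\Delta \delta_n(\mathrm{NF}_\Delta(v))$, or these top segments are equal and $\tau_n(\mathrm{NF}_\Delta(u)) <_\Delta^{n-1} \tau_n(\mathrm{NF}_\Delta(v))$. The key step is then to translate the word-level quantities $\delta_n$ and $\tau_n$ back into the group-theoretic parabolic factors. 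Using the remarks preceding the corollary, $\delta_n(\mathrm{NF}_\texttt{Rlex}(w)) = \mathrm{NF}_\texttt{Rlex}(w_n)$ and $\tau_n(\mathrm{NF}_\texttt{Rlex}(w)) = \mathrm{NF}_\texttt{Rlex}(w_{n-1})\ldots\mathrm{NF}_\texttt{Rlex}(w_1)$, under the abuse of notation $\delta_i(w)=w_i$. Hence $\delta_n(\mathrm{NF}_\Delta(u)) <_\Delta \delta_n(\mathrm{NF}_\Delta(v))$ is precisely $\mathrm{NF}_\Delta(u_n) <_\Delta \mathrm{NF}_\Delta(v_n)$, which by Definition \ref{Definition Deletion Order on W} is exactly $u_n <_\Delta^W v_n$, giving case $(i)$.

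For case $(ii)$, once $\delta_n$-segments agree I would note that equality of the top segments means $u_n = v_n$ as group elements (since the normal form determines the element), and that the residual comparison $\tau_n(\mathrm{NF}_\Delta(u)) <_\Delta^{n-1} \tau_n(\mathrm{NF}_\Delta(v))$ is the comparison of the concatenated tails $\mathrm{NF}_\texttt{Rlex}(u_{n-1})\ldots\mathrm{NF}_\texttt{Rlex}(u_1)$ against the analogous word for $v$. Since these tails are words over the smaller alphabet $\mathcal{A}_{n-1} = \{s_1,\ldots,s_{n-1}\}$, the comparison by $<_\Delta^{n-1}$ matches the claimed condition $u_{j-1}\ldots u_1 <_\Delta v_{j-1}\ldots v_1$. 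The role of the free index $j$ deserves care: I would argue that once $u_n = v_n$ one may iterate the same dichotomy downward through $\delta_{n-1}, \delta_{n-2}, \dots$, so the stated condition holds for any $j$ at which the higher factors have already been matched; spelling out this indexing cleanly is where I expect the only real friction.

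The main obstacle, then, is not any deep argument but bookkeeping around the abuse of notation $\delta_i(w)=w_i$ and the exact meaning of the index $j$ in the statement of $(ii)$ --- in particular making sure the translation between the word operations $\delta_n,\tau_n$ and the group factors $w_n, w_{n-1},\ldots$ is stated so that the recursion lines up and the claimed equivalence holds verbatim. Everything substantive is already carried by Lemma \ref{Lemma delta dichotomy} and Theorem \ref{Theorem DeltaNF = RLexNF}; the corollary is essentially their repackaging at the level of $W$, so the proof should be short once the notational dictionary is fixed.
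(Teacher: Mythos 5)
Your proposal is correct and follows essentially the same route as the paper: the paper's own proof is simply the observation that the statement is a direct corollary of Lemma \ref{Lemma delta dichotomy}, read through the dictionary $\delta_n(\mathrm{NF}_\texttt{Rlex}(w)) = \mathrm{NF}_\texttt{Rlex}(w_n)$, $\tau_n(\mathrm{NF}_\texttt{Rlex}(w)) = \mathrm{NF}_\texttt{Rlex}(w_{n-1})\ldots\mathrm{NF}_\texttt{Rlex}(w_1)$ set up in the preceding paragraph together with Theorem \ref{Theorem DeltaNF = RLexNF}. Your concern about the free index $j$ is well placed (the paper's statement is loosely worded there, and your iterated-dichotomy reading is the intended one), but this is a presentational issue rather than a gap in your argument.
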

\begin{proof}
    This is a direct corollary of Lemma \ref{Lemma delta dichotomy}.
\end{proof}

Before proceeding, we recall the definition of the Bruhat order on $W$ and some of its basic properties. 
We refer the reader to Section 2 of \cite{COCG} for a thorough introduction on the Bruhat order.
Let $T=S^W$ and $u,v \in W$, then $u <_B v$ if there exists a sequence $t_1, \ldots, t_k \in T$ such that $v = ut_1\ldots t_k$ and 
$\ell(u) < \ell(ut_1) < \ldots < \ell(ut_1\ldots t_k) = \ell(v)$.
More relevant to us, is the characterisation of the Bruhat order by Lemma 2.2.1. of \cite{COCG}: $u <_B v$ if and only if for any reduced word for $v$, $v^*$, there exists some reduced word for $u$, $u^*$, that is a subword of $v^*$.

\begin{corollary}\label{Lemma Bruhat Less than}
Let $u,v \in W$ be such that $u <_B v$. Then $u <_\Delta^W v$.
\end{corollary}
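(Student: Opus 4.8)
The goal is to show that $u <_B v$ implies $u <_\Delta^W v$, i.e.\ that the deletion order on $W$ refines the Bruhat order. The plan is to reduce the statement on group elements to the subword lemma on words, Lemma \ref{LemmaSubwords}, by choosing the right representatives. The key observation is that we now have Theorem \ref{Theorem DeltaNF = RLexNF} at our disposal, so $\mathrm{NF}_\Delta(w)$ coincides with $\mathrm{NF}_{\texttt{RLex}}(w)$; but more importantly, by Definition \ref{Definition Deletion Order on W}, deciding $u <_\Delta^W v$ amounts to comparing $\mathrm{NF}_\Delta(u)$ with $\mathrm{NF}_\Delta(v)$ in the word order $<_\Delta$.

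First I would invoke the subword characterisation of the Bruhat order recalled just before the statement (Lemma 2.2.1 of \cite{COCG}): since $u <_B v$, for the particular reduced word $v^* = \mathrm{NF}_\Delta(v)$ there exists a reduced word $u^*$ for $u$ which is a \emph{proper} subword of $\mathrm{NF}_\Delta(v)$ (proper because $\ell(u) < \ell(v)$ forces the lengths to differ). Then Lemma \ref{LemmaSubwords} applies directly to the words $u^*$ and $\mathrm{NF}_\Delta(v)$ over the alphabet $S$, yielding
$$u^* <_\Delta \mathrm{NF}_\Delta(v).$$
Finally, since $\mathrm{NF}_\Delta(u)$ is by definition the $<_\Delta$-minimal element of $\mathcal{R}(u)$, we have $\mathrm{NF}_\Delta(u) \le_\Delta u^*$, so chaining the two inequalities gives $\mathrm{NF}_\Delta(u) <_\Delta \mathrm{NF}_\Delta(v)$, which is exactly $u <_\Delta^W v$ by Definition \ref{Definition Deletion Order on W}.

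I expect this proof to be short, since the heavy lifting was done in Lemma \ref{LemmaSubwords}; the only point requiring a little care is the \emph{existence} of the subword $u^*$ of the specific word $\mathrm{NF}_\Delta(v)$, rather than of some arbitrary reduced word for $v$. This is guaranteed because the subword characterisation of $<_B$ holds for \emph{every} reduced word for $v$, so in particular for $\mathrm{NF}_\Delta(v)$, and the properness of the subword (needed to apply Lemma \ref{LemmaSubwords}, which is stated for proper subwords giving strict inequality) follows from $\ell(u) < \ell(v)$. No induction or case analysis is needed here, so I do not anticipate a genuine obstacle.
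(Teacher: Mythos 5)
Your proof is correct and takes essentially the same route as the paper's: both apply the subword characterisation of $<_B$ to the specific reduced word $v^* = \mathrm{NF}_\Delta(v)$, use Lemma \ref{LemmaSubwords} to obtain $u^* <_\Delta \mathrm{NF}_\Delta(v)$, and chain with the minimality inequality $\mathrm{NF}_\Delta(u) \le_\Delta u^*$. Your additional remark that properness of the subword follows from $\ell(u) < \ell(v)$ is a small point of care that the paper leaves implicit.
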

\begin{proof}
Since $u <_B v$, we can fix reduced words for $u$ and $v$ respectively, $u^*$ and $v^*$ such that letting $v^* = \mathrm{NF}_\Delta(v)$ and using the fact that $\mathrm{NF}_\Delta(u) \le_\Delta u^*$, by Lemma \ref{LemmaSubwords} we know $\mathrm{NF}_\Delta(u) \le_\Delta u^* <_\Delta  \mathrm{NF}_\Delta(v)$, whence $u <_\Delta^W v$.
\end{proof}

\section{Successor Algorithm}\label{algorithm}

Let the Cayley graph of $(W, S)$ be denoted by $\mathrm{Cay}(W, S)$. In this context, we consider there to be a unique, undirected, $s$-labelled edge in the graph between the elements $w$ and $ws$, for all $w \in W$ and $s \in S$. We draw attention to the fact that we are multiplying our elements of $S$ on the right of $w$.

We present the following greedy algorithm on $\mathrm{Cay}(W, S)$ in the case when $W$ is finite.
\begin{algorithm} \label{Algorithm min min}
{The algorithm takes two inputs}: The Cayley graph $\mathrm{Cay}( W, S )$ and a choice of total order on $S$, $<_S$.
It outputs the following: $L$, a labelling (bijection) from $W$ to the set $\{ 1, ..., | W | \}$ and $T'$, a spanning tree of $\mathrm{Cay}( W, S )$.
        \par 
        
    The algorithm proceeds as follows: First, set $L( id ) =  1$ where id denotes the identity element of $W$. Then, for $k > 1$, repeat the following step until $T'$ is a spanning tree of $\mathrm{Cay}( W, S )$:
        \begin{itemize}
            \item Set $X = \{ w \in W \,|\, L( w ) < k \}$;
            \item Set $Y = \{ s \in S \,|\, \text{ there exists } w \in X \text{ such that } ws \notin X \}$;
            \item Set $y = min( Y )$ with respect to $<_S$;
            \item Set $x = min( X )$ with respect to $L$ such that $xy \notin X$;
            \item Set $L( x ) = k $;
            \item Append the edge $\{ x, xy \}$ to $T'$.
        \end{itemize}
\end{algorithm}

Algorithm \ref{Algorithm min min} can be simplified to a more concise form: greedily choose the next element to add to ${X}$ by first considering those edges in $\mathrm{Cay}(W, S)$ connecting elements in $X$ to elements in $W\setminus X$ of minimal $S$-label, and secondly, minimise the $L$ value of the nodes of these edges in $X$. This can be seen as an instance of Prim's greedy algorithm (\cite{prims}) on $\mathrm{Cay}(W, S)$: take the initial vertex to be the identity element, each $s_i$-edge is weighted $i$, and the tie-breaking between edges is resolved by choosing that edge with least, already labelled vertex. Since Prim's algorithm produces a minimal spanning tree, we know that ${T'}$ is minimal.

We present some small examples by way of illustration.

\begin{example}\label{Example Prims sym3}
    We consider the example $(W,S) = \Sym{3}$ generated by $s_1 = (1, 2)$ and $s_2 = (2, 3)$, so $s_1 <_S s_2$. We associate the black edges with edges labelled $s_1$ in $\mathrm{Cay}(W,S)$ and grey for $s_2$.
\begin{figure}[H]\label{Figure sym3 example}
    \centering
    \begin{subfigure}{0.45\textwidth} 
        \centering
        \includegraphics[width=6.5cm]{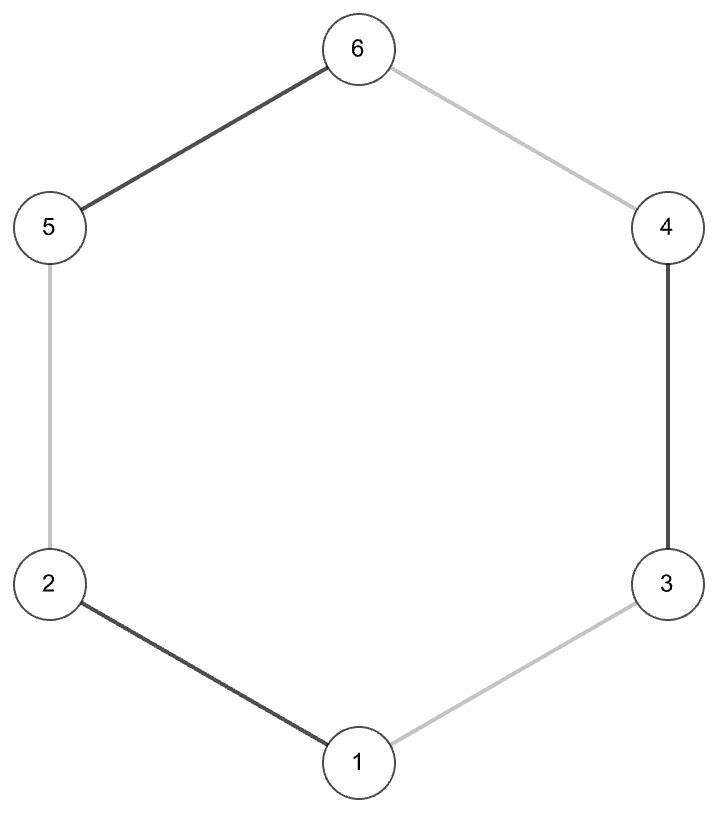} 
        \label{fig:subfig1}
    \end{subfigure}
    \hfill 
    \begin{subfigure}{0.45\textwidth} 
        \centering
        \includegraphics[width=6.5cm]{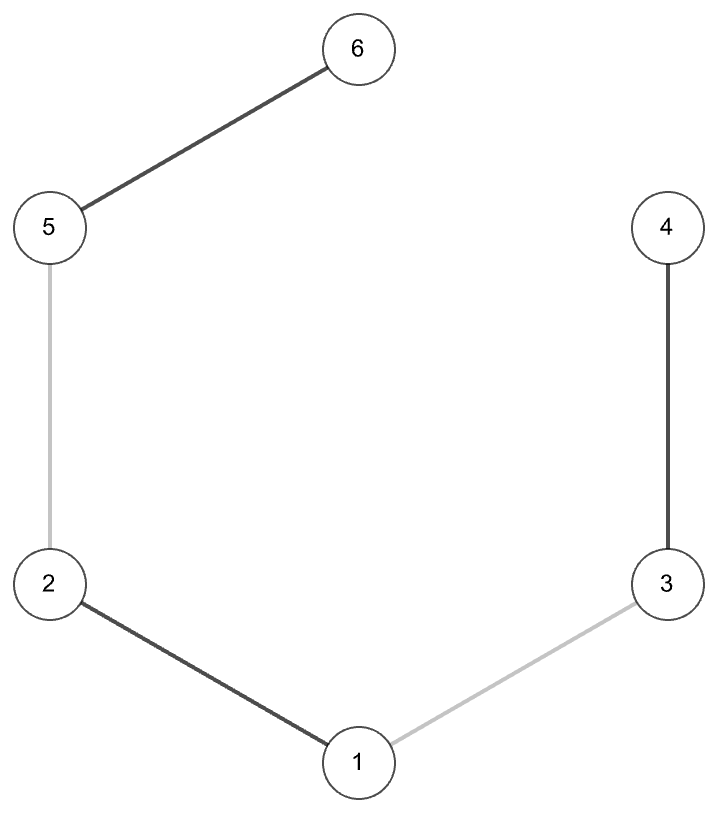} 
        \label{fig:subfig2}
    \end{subfigure}
    
    \caption{The Cayley Graph of $\Sym{3}$ with the labelling $L$ produced by Algorithm \ref{Algorithm min min} (left) and the resulting minimal spanning tree (right).}
    \label{fig:mainfig}
\end{figure}

\begin{table}[H]
    \centering

    \begin{tabular}{r|c|l}
 1&$ 1\, 2\, 3  $&$ \mathrm{e} $ \\  
 2&$ 2\, 1\, 3  $&$ s_1 $ \\  
 3&$ 1\, 3\, 2 $&$ s_2 $ \\  
 4&$ 2\, 3\, 1 $&$ s_2s_1 $ \\  
 5&$ 3\, 1\, 2 $&$ s_1s_2 $ \\  
 6&$ 3\, 2\, 1 $&$ s_1s_2s_1 $
    \end{tabular}
    \caption{The $<_\Delta$-ordering on $\Sym{3}$. From left to right for each $w \in W$: $L(w)$, its image as a permutation, $(1)w, (2)w, (3)w$, and $\mathrm{NF}_\Delta(w)$ . }
    \label{Table S4 ShortLoc}
\end{table}

Next, we present the example  $(W,S) = \Sym{4}$ generated by $s_1 = (1, 2)$, $s_2 = (2, 3)$ and $s_3 = (3, 4)$, and $<_S$ given by $s_1 <_S s_2 <_S s_3$. In this case, edges labelled $s_1$, $s_2$ and $s_3$ correspond to the black, grey and dashed edges.

\begin{figure}[H]
    \centering
    \begin{subfigure}{1\textwidth} 
        \centering
        \includegraphics[width=10cm]{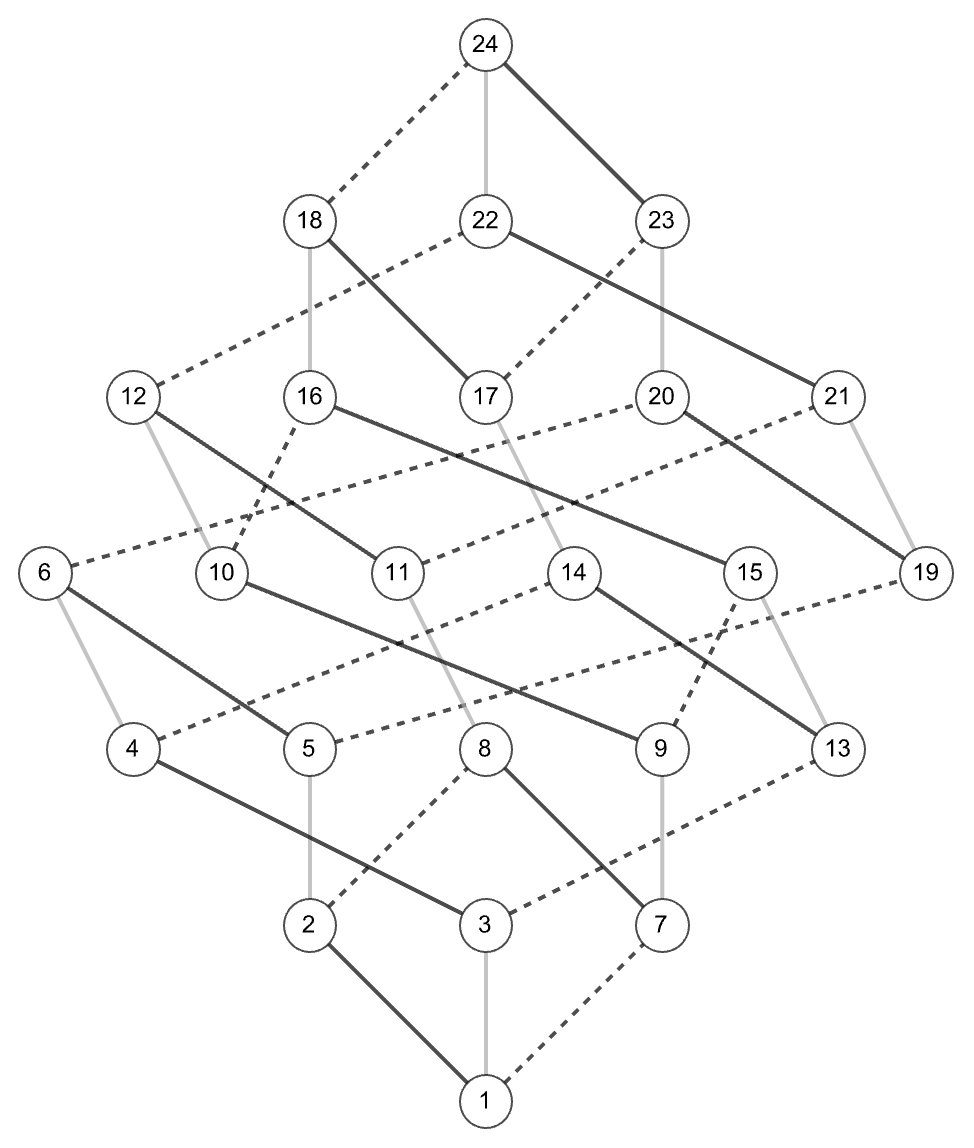} 
        \label{fig:subfig1}
    \end{subfigure}
    
    \caption{The Cayley Graph of $\Sym{4}$ with the labelling $L$ produced by Algorithm \ref{Algorithm min min}.}
    \label{fig:mainfig}
\end{figure}

\begin{figure}[H]
    \centering
    \begin{subfigure}{1\textwidth} 
        \centering
        \includegraphics[width=10cm]{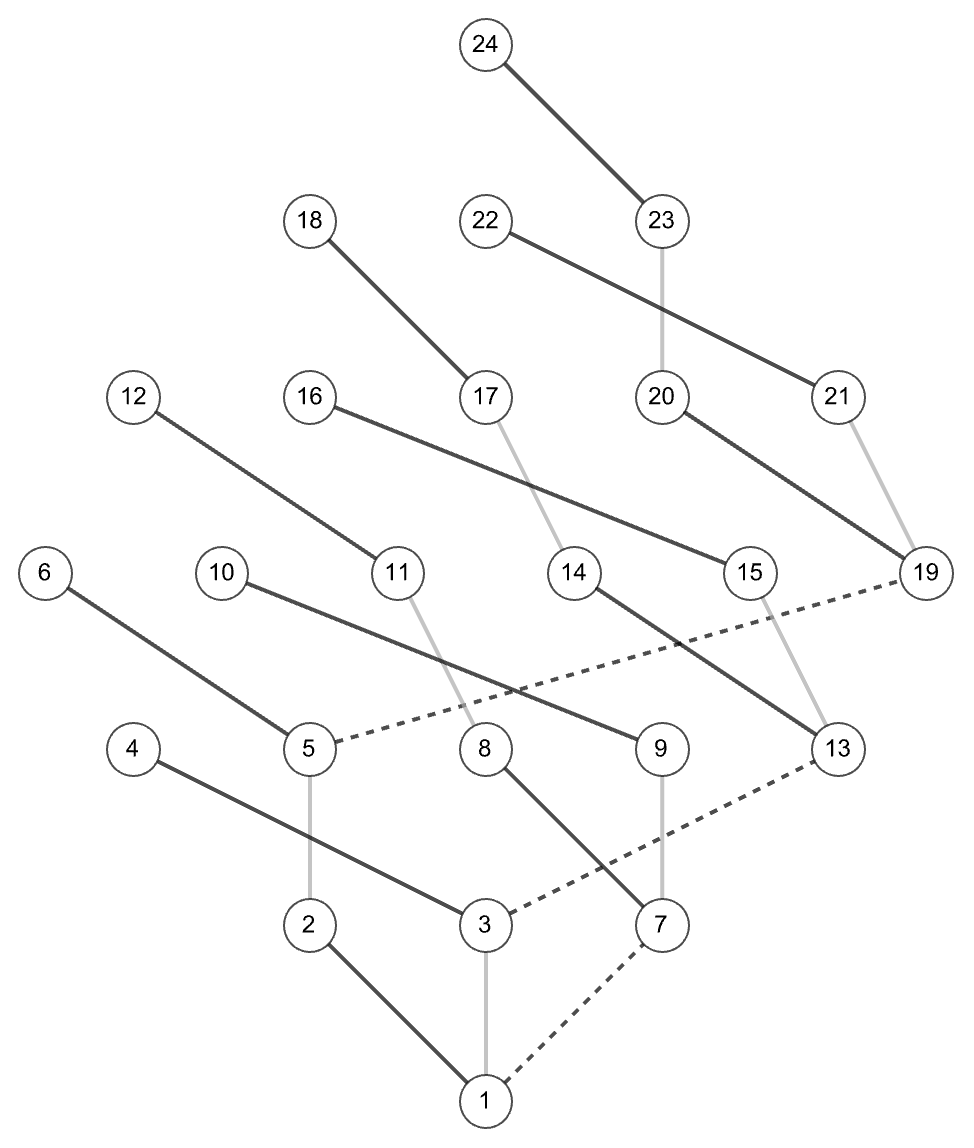} 
        \label{fig:subfig1}
    \end{subfigure}
    
    \caption{The minimal spanning tree of $\Sym{4}$ with the labelling $L$ produced by Algorithm \ref{Algorithm min min}.}
    \label{fig:mainfig}
\end{figure}

\begin{table}[H]
    \centering
\begin{tabular}{c c}

    \begin{tabular}{r|c|l}
  1&$ 1\, 2\, 3\, 4\,  $&$  $ \\  
  2&$ 2\, 1\, 3\, 4\,  $&$ s_1 $ \\  
  3&$ 1\, 3\, 2\, 4\,  $&$ s_2 $ \\  
  4&$ 2\, 3\, 1\, 4\,  $&$ s_2s_1 $ \\  
  5&$ 3\, 1\, 2\, 4\,  $&$ s_1s_2 $ \\  
  6&$ 3\, 2\, 1\, 4\,  $&$ s_1s_2s_1 $ \\  

   7&$ 1\, 2\, 4\, 3\, $&$ s_3 $ \\  
   8&$ 2\, 1\, 4\, 3\, $&$ s_3s_1 $ \\  
   9&$ 1\, 3\, 4\, 2\, $&$ s_3s_2 $ \\  
  10&$ 2\, 3\, 4\, 1\,  $&$ s_3s_2s_1 $ \\  
  11&$ 3\, 1\, 4\, 2\,  $&$ s_3s_1s_2 $ \\  
  12&$ 3\, 2\, 4\, 1\,  $&$ s_3s_1s_2s_1 $ 
    \end{tabular}&\hspace{2cm}
    \begin{tabular}{r|c|l}
 13&$ 1\, 4\, 2\, 3\,  $&$ s_2s_3 $ \\  
 14&$ 2\, 4\, 1\, 3\,  $&$ s_2s_3s_1 $ \\  
 15&$ 1\, 4\, 3\, 2\,  $&$ s_2s_3s_2 $ \\  
 16&$ 2\, 4\, 3\, 1\,  $&$ s_2s_3s_2s_1 $ \\  
 17&$ 3\, 4\, 1\, 2\,  $&$ s_2s_3s_1s_2 $ \\  
 18&$ 3\, 4\, 2\, 1\,  $&$ s_2s_3s_1s_2s_1 $ \\ 

 19&$  4\, 1\, 2\, 3\,  $&$ s_1s_2s_3 $ \\  
 20&$  4\, 2\, 1\, 3\,  $&$ s_1s_2s_3s_1 $ \\  
 21&$  4\, 1\, 3\, 2\,  $&$ s_1s_2s_3s_2 $ \\  
 22&$  4\, 2\, 3\, 1\,  $&$ s_1s_2s_3s_2s_1 $ \\  
 23&$  4\, 3\, 1\, 2\,  $&$ s_1s_2s_3s_1s_2 $ \\  
 24&$  4\, 3\, 2\, 1\,  $&$ s_1s_2s_3s_1s_2s_1 $ 
    \end{tabular}
    \end{tabular}
    \caption{The $<_\Delta$-ordering on $\Sym{4}$. From left to right for each $w \in W$: $L(w)$, its image as a permutation, $(1)w, (2)w, (3)w, (4)w$, and $\mathrm{NF}_\Delta(w)$ . }
    \label{Table S4 ShortLoc}
\end{table}

\end{example}

In our next theorem, we see that the labelling of $\mathrm{Cay}(W, S)$ by Algorithm \ref{Algorithm min min} and the deletion order on elements of $W$ coincide. 
\begin{theorem}\label{Theorem min min equal deletion}
Let $w \in W$ and let $L(w) = k$. Then $w$ is the $k^{th}$ element of $W$ with respect to $<_\Delta^W$.
\end{theorem}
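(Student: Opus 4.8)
The plan is to show that Algorithm \ref{Algorithm min min} adds elements to the labelled set $X$ in exactly the order prescribed by $<_\Delta^W$. Equivalently, I will prove by induction on $k$ that the element $w$ receiving label $L(w)=k$ is precisely the $k^{\text{th}}$-smallest element of $W$ under $<_\Delta^W$. The base case is immediate: the identity has $L(\mathrm{id})=1$, and since $\mathrm{NF}_\Delta(1)=\mathrm{e}$ is the empty word, which is $\le_\Delta$ every word by Lemma \ref{LemmaSubwords}, the identity is the $<_\Delta^W$-least element.

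\emph{The heart of the argument} is the induction step. Assume that for all $j<k$, the element labelled $j$ is the $j^{\text{th}}$ element of $W$ under $<_\Delta^W$; equivalently, $X=\{w : L(w)<k\}$ is exactly the set of the $k-1$ smallest elements under $<_\Delta^W$. I must show the element $x$ that the algorithm selects to receive label $k$ is the $<_\Delta^W$-least element of $W\setminus X$. The algorithm picks $x$ by choosing $y=\min(Y)$ (the least generator on some edge leaving $X$), then the $L$-least $x\in X$ with $xy\notin X$, and sets the new element to be $xy$. So I need to prove: among all elements of $W\setminus X$, the one minimising $\mathrm{NF}_\Delta$ is $w^\ast:=xy$, where $x\in X$ and $y=\min\{s\in S : \exists w\in X,\ ws\notin X\}$ with $x$ the $L$-least such element.

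\emph{The key technical link} is that every element $z\in W\setminus X$, being reachable from $X$ (the Cayley graph is connected), can be written $z=x'\,s$ for some $x'\in X$ and $s\in S$ with $x's\notin X$: indeed, take a $<_\Delta^W$-minimal reduced-word consideration, or note that since $X$ is an order ideal of the smallest elements there must be a generator dropping $z$ into $X$. For such $z=x's$ I want to compare $\mathrm{NF}_\Delta(z)$ across all candidates using Lemma \ref{Lemma split word on right}. Writing $z=x's$, the natural reduced word to examine is $\mathrm{NF}_\Delta(x')\,s$, and I must verify the four hypotheses of Lemma \ref{Lemma split word on right} (that appending $s$ strictly increases the word, and the cross-inequalities), using Lemma \ref{LemmaSubwords} and the fact that $x',x\in X$ are smaller than $z, w^\ast$. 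Lemma \ref{Lemma split word on right} then reduces the comparison of two such candidates to: first, comparing the final generators in $<_\mathcal{A}$; and second, when these agree, comparing the $x'$-parts by $<_\Delta^W$. This matches precisely the algorithm's two-stage greedy choice (minimise the generator label $y$ first, then minimise the $L$-value, i.e.\ the $<_\Delta^W$-rank, of the element in $X$), giving exactly the claimed correspondence.

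\emph{The main obstacle} I anticipate is the bookkeeping needed to guarantee that $\mathrm{NF}_\Delta(z)$ really does end in the generator $s$ that the algorithm reads off the edge $\{x',z\}$, rather than in some other generator. In general $\mathrm{NF}_\Delta(z)$ need not end in $s$ merely because $z=x's$ with $x'\in X$; I must argue that the $<_\Delta^W$-minimal element of $W\setminus X$ has its normal form ending in the minimal available generator $y=\min(Y)$. This requires showing that if some $z\in W\setminus X$ had $\mathrm{NF}_\Delta(z)$ ending in a generator $s'<_S y$, then the predecessor $zs'$ would have to lie outside $X$ (else $s'\in Y$, contradicting $y=\min Y$), and then a descent argument via Corollary \ref{corollary Heads and Tails Cox general} together with the minimality of $z$ forces a contradiction. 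Carefully combining Lemma \ref{Lemma split word on right} with this descent/minimality analysis is the delicate part; the remaining verifications are routine applications of the already-established concatenation lemmas.
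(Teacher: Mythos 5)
Your plan follows the same route as the paper's proof: induction on the algorithm's steps with $X$ the order ideal of the $k-1$ smallest elements; Theorem \ref{Theorem DeltaNF = RLexNF} to control which generator a normal form ends in; and Lemma \ref{Lemma split word on right} (its four hypotheses supplied by Lemma \ref{LemmaSubwords} together with the fact that labelled elements are $<_\Delta^W$-below unlabelled ones) to match the algorithm's two-stage greedy choice against the deletion order. The delicate point you flag in your final paragraph is the same one the paper must address, and your descent argument there is essentially the paper's.

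The genuine flaw is your ``key technical link'': it is false that every $z\in W\setminus X$ can be written as $x's$ with $x'\in X$. For a counterexample take $W=\Sym{3}$ with $k=2$, so $X=\{1\}$: the element $z=s_1s_2$ lies in $W\setminus X$, but its Cayley-graph neighbours are $zs_2=s_1$ and $zs_1=s_1s_2s_1$, neither of which is in $X$. A $<_\Delta^W$-order ideal need not be adjacency closed, so your justification that ``there must be a generator dropping $z$ into $X$'' does not hold, and the global comparison over all of $W\setminus X$ cannot be run as stated. What is true --- and is all that is needed --- is that the $<_\Delta^W$-\emph{least} element $z_{\min}$ of $W\setminus X$ is adjacent to $X$: for any descent $s'$ of $z_{\min}$ we have $z_{\min}s' <_B z_{\min}$, hence $z_{\min}s' <_\Delta^W z_{\min}$ by Corollary \ref{Lemma Bruhat Less than}, hence $z_{\min}s'\in X$ by minimality of $z_{\min}$. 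This is exactly the step in the paper's proof asserting that the least unlabelled element ``must be adjacent to some previously labelled vertex'', justified there via Lemma \ref{LemmaSubwords}. Once your universal claim is replaced by this restricted one, it suffices to compare $z_{\min}$ against the algorithm's chosen element --- both now adjacent to $X$ --- using Lemma \ref{Lemma split word on right}; combined with the descent argument you sketch (which shows the last letter of $\mathrm{NF}_\Delta(z_{\min})$ cannot be $<_S y$, since its predecessor would then lie in $X$ and force a smaller element of $Y$), this recovers the paper's proof.
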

\begin{proof}
We proceed by induction on $L(w)$. Note that $\mathrm{NF}_\Delta(id)$ is the empty word and consequently the minimum word with respect to $<_\Delta $. In  Step 1 in Algorithm \ref{Algorithm min min}, we defined $L(id) = 1$  and hence the base case is satisfied.

Now suppose for some $k\ge 1$, our statement holds for all $u \in W$ such that $L(u) \le k$. Let $w \in W$ be that element such that $L(w) = k+1$; the least element with respect $<^W_\Delta$ that is not labelled in the $k^{th}$ step of Algorithm \ref{Algorithm min min}. By Theorem \ref{Theorem DeltaNF = RLexNF}, we know that $\mathrm{NF}_\Delta(w) = \mathrm{NF}_\texttt{RLex}(w)$ and hence $\mathrm{NF}_\Delta(w) = \mathrm{NF}_\texttt{RLex}(ws)s$ for minimum $s \in S$ (with respect to $<_S$) such that $\ell(ws) = \ell(w) -1$. 

We also know that $w$ must be adjacent to some previously labelled vertex in $\mathrm{Cay}(W, S)$. Since, if not, then there exists some unlabelled element $v \in W$ such that $v <_B w$ whence Lemma \ref{LemmaSubwords} tells us that $L(v)<L(w)$, a contradiction. 

Let $x \in W$ be another element that is unlabelled and adjacent to some labelled vertex of $\mathrm{Cay}(W, S)$ in the $k^{th}$ step of Algorithm \ref{Algorithm min min}. And let $r$ be the least label of the edges connecting $x$ to a labelled vertex. Therefore $\mathrm{NF}_\Delta(x) = \mathrm{NF}_\texttt{RLex}(xr)r$. Since both $w$ and $x$ are unlabelled, we must have 

$$\mathrm{NF}_\texttt{RLex}(xr) <_\Delta  \mathrm{NF}_\texttt{RLex}(x),  \mathrm{NF}_\texttt{RLex}(xr) <_\Delta \mathrm{NF}_\texttt{RLex}(w),$$
$$\mathrm{NF}_\texttt{RLex}(ws) <_\Delta  \mathrm{NF}_\texttt{RLex}(x),  \mathrm{NF}_\texttt{RLex}(ws) <_\Delta \mathrm{NF}_\texttt{RLex}(w).$$

Hence we can apply Lemma \ref{Lemma split word on right} whence we know that $s$ must be of minimal possible edge label amongst those edges in $\mathrm{Cay}(W, S)$ connecting a labelled and unlabelled vertex and, amongst those edges of label $s$, $L(ws)$ must be minimal. Therefore $L(w)$ is indeed the label of the $(k+1)^{th}$ least element of $W$ with respect to $<^W_\Delta$.
\end{proof}

\begin{corollary}\label{Corollary min min equal deletion}
For all $w \in W$, $$L(w_1) < L(w_2) \text{ if and only if } w_1 <_\Delta^W w_2.$$
\end{corollary}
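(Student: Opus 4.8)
The final statement to prove is Corollary \ref{Corollary min min equal deletion}, which asserts that $L(w_1) < L(w_2)$ if and only if $w_1 <_\Delta^W w_2$. My plan is to observe that this is an essentially immediate consequence of the preceding Theorem \ref{Theorem min min equal deletion}, which establishes that the labelling function $L$ agrees precisely with the position of an element in the $<_\Delta^W$ enumeration. The core idea is that both $L$ and $<_\Delta^W$ are total orders on the finite set $W$, and Theorem \ref{Theorem min min equal deletion} identifies them as the \emph{same} total order, so the equivalence of the two strict inequalities follows formally.

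First I would recall the exact content of Theorem \ref{Theorem min min equal deletion}: for each $w \in W$, if $L(w) = k$, then $w$ is the $k^{\text{th}}$ element of $W$ with respect to $<_\Delta^W$. In other words, $L(w)$ is precisely the rank of $w$ in the $<_\Delta^W$-ordering (counting from $1$). The key step is to translate this rank description into a comparison statement. Since $W$ is finite and $<_\Delta^W$ is a total order, the rank function $w \mapsto L(w)$ is a strictly order-preserving bijection from $(W, <_\Delta^W)$ to $(\{1,\ldots,|W|\}, <)$. Therefore, for any two elements, $w_1 <_\Delta^W w_2$ holds exactly when the rank of $w_1$ is strictly less than the rank of $w_2$, which is to say $L(w_1) < L(w_2)$.

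I do not anticipate a genuine obstacle here, since the statement is a direct corollary; the only point requiring minor care is making explicit that ranking by a total order is strictly monotone, so that the biconditional follows in both directions rather than just one. Concretely, the argument runs as follows: suppose $w_1 <_\Delta^W w_2$. By Theorem \ref{Theorem min min equal deletion}, $L(w_1)$ is the rank of $w_1$ and $L(w_2)$ is the rank of $w_2$ in the $<_\Delta^W$-enumeration, and since $w_1$ precedes $w_2$ in this total order, $w_1$ occupies an earlier position, giving $L(w_1) < L(w_2)$. Conversely, if $L(w_1) < L(w_2)$, then $w_1$ appears strictly before $w_2$ in the $<_\Delta^W$-enumeration by the same theorem, and because $<_\Delta^W$ is a total order (so any two distinct elements are comparable and the enumeration is consistent with the order), this forces $w_1 <_\Delta^W w_2$. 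This completes the proof.
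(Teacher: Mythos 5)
Your proof is correct and matches the paper's treatment: the paper offers no separate argument for this corollary, regarding it as an immediate consequence of Theorem \ref{Theorem min min equal deletion}, exactly as you argue by identifying $L$ with the rank function of the total order $<_\Delta^W$. Your added remark that rank in a total order is strictly monotone in both directions is the only content needed, and it is sound.
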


Recall that for $w \in W$, $L(w) \in \{1, \ldots, |W|\}$ and so $<$ in Corollary \ref{Corollary min min equal deletion} is just the usual inequality between natural numbers.

\section{Artinian Orders}\label{artinianorders}

In this section, we look at a finiteness condition on $<_{\Delta}^W$ which we now define.

\begin{definition}[Artinian Orders]\label{definition :artinianorders}
Let $(W, S)$ be a (not necessarily finite nor irreducible) Coxeter group equipped with a total ordering on $S$, $<_S$. Then we call $<_\Delta^W$ \textit{Artinian} if for all elements $w \in W$, the set $w^\downarrow := \{ u \in W | u <_\Delta^W w \}$ is finite.

When $<_\Delta^W$ is indeed {Artinian}, we define $L_0(w) = |w^\downarrow|$ for all $w \in W$.
\end{definition}

Note that $L_0(w) = L(w) - 1$ for all $w \in W$ for $L$ described in Algorithm \ref{Algorithm min min}.
Being Artinian is the exact condition needed for Algorithm \ref{Algorithm min min} to be able to label any element of $W$ after finitely many steps.
We now see that determining if $<_\Delta^W$ is Artinian is very easily verified.

\begin{lemma}\label{Lemma artinian iff finite}
The deletion order $<_\Delta^W$ on $(W,S)$ is Artinian if and only if $W_{\{s_1,\ldots,{s_{n-1}}\}}$  is finite.
\end{lemma}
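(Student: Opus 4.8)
The plan is to prove both implications by exploiting the fact that the very first comparison made by $<_\Delta$ (equivalently $<_\Delta^n$) is between the $\lambda_n$-values of the two normal forms, i.e. between their numbers of occurrences of $s_n$; see Remark~(i) after Definition~\ref{Definition Deletion order}. Throughout write $P = W_{\{s_1,\ldots,s_{n-1}\}}$. I would use two standard facts about reduced words: every reduced word for an element of $P$ involves only the letters $s_1,\ldots,s_{n-1}$, and every contiguous factor of a reduced word is itself reduced. In particular, for $p \in P$ the normal form $\mathrm{NF}_\Delta(p)$ contains no $s_n$, so $\lambda_n(\mathrm{NF}_\Delta(p)) = 1$.

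For the direction ``$P$ infinite $\Rightarrow$ not Artinian'', I would exhibit a single element dominating all of $P$. Since $\mathrm{NF}_\Delta(s_n) = s_n$ has $\lambda_n = 2$ while every $p \in P$ has $\lambda_n(\mathrm{NF}_\Delta(p)) = 1$, the initial $\lambda_n$-comparison already yields $\mathrm{NF}_\Delta(p) <_\Delta \mathrm{NF}_\Delta(s_n)$, hence $p <_\Delta^W s_n$ for all $p \in P$. Thus $P \subseteq (s_n)^\downarrow$, so $(s_n)^\downarrow$ is infinite and $<_\Delta^W$ is not Artinian. (Alternatively this is immediate from Corollary~\ref{corollary Heads and Tails Cox general}, since $\delta_n(p) = 1 <_\Delta^W s_n = \delta_n(s_n)$.)

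For the converse ``$P$ finite $\Rightarrow$ Artinian'', I would fix $w \in W$ and set $M = \lambda_n(\mathrm{NF}_\Delta(w))$. Any $u$ with $u <_\Delta^W w$ satisfies $\mathrm{NF}_\Delta(u) <_\Delta \mathrm{NF}_\Delta(w)$, and since the $\lambda_n$-values are compared first, $\lambda_n(\mathrm{NF}_\Delta(u)) \le M$; that is, $\mathrm{NF}_\Delta(u)$ contains at most $M-1$ copies of $s_n$. Writing $\mathrm{NF}_\Delta(u) = q_0\, s_n\, q_1\, s_n \cdots s_n\, q_t$ with $t \le M-1$, each factor $q_i$ is a contiguous factor of a reduced word, hence a reduced word over $\{s_1,\ldots,s_{n-1}\}$, hence an element of the set $R_P$ of reduced words representing elements of $P$. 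As $P$ is finite, $R_P$ is finite, so there are at most $\sum_{m=1}^{M} |R_P|^{m}$ possibilities for the word $\mathrm{NF}_\Delta(u)$. Since $u \mapsto \mathrm{NF}_\Delta(u)$ is injective, $w^\downarrow$ is finite.

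The routine part is the bookkeeping in the last paragraph; the step I expect to be the real obstacle is the reduction to a \emph{finite alphabet of factors}. The crux is the combination of (a) the $\lambda_n$-first nature of $<_\Delta$, which caps the number of $s_n$'s in any normal form lying below $\mathrm{NF}_\Delta(w)$, and (b) the observation that the blocks between consecutive $s_n$'s are forced to be reduced words over $\{s_1,\ldots,s_{n-1}\}$ and therefore lie in a finite pool precisely when $P$ is finite. Once these two points are secured the counting is immediate, and no appeal to the coset decomposition of Corollary~\ref{corollary Heads and Tails Cox general} is strictly necessary.
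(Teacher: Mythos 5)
Your proof is correct and takes essentially the same route as the paper's: both directions rest on the fact that $<_\Delta$ compares $\lambda_n$-values first, which gives $W_{\{s_1,\ldots,s_{n-1}\}} \subseteq (s_n)^\downarrow$ for one implication and, for the other, caps the number of occurrences of $s_n$ in $\mathrm{NF}_\Delta(u)$ for any $u <_\Delta^W w$, so that the blocks between consecutive $s_n$'s come from a finite pool. The only (cosmetic) difference is that the paper asserts each block is itself the $<_\Delta$-normal form of an element of the parabolic subgroup, whereas you only use that each block is some reduced word over $\{s_1,\ldots,s_{n-1}\}$ --- a slightly larger but still finite pool, which spares you from justifying the stronger claim.
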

\begin{proof}
Suppose $<_\Delta^W$ is Artinian. Then  $s_n^\downarrow$ must be finite. But for all $w \in W_{\{s_1,\ldots{s_{n-1}}\}}$, $\mathrm{NF}_\Delta(w)$ has no appearances of $s_n$. Hence $ W_{\{s_1,\ldots{s_{n-1}}\}} \subseteq s_n^\downarrow$. So $W_{\{s_1,\ldots{s_{n-1}}\}}$ must be finite.

Suppose $W_{\{s_1,\ldots,{s_{n-1}}\}}$ is finite. We show that for all $w \in W$, there are only finitely many possible $v \in w^\downarrow$. Write the deletion sequence of $v$, $$\widehat{\mathrm{NF}_\Delta(v)}_n = [v_0,\ldots, v_k].$$ Now the number of appearances of $s_n$ in $\mathrm{NF}_\Delta(v)$ must be less than or equal to that of $\mathrm{NF}_\Delta(w)$. Equivalently,  $k=  \lambda_n(\mathrm{NF}_\Delta(v)) \le \lambda_n(\mathrm{NF}_\Delta(w)) $. Each of the words $v_0,\ldots, v_k $ is the $<_{\Delta}$-normal form for some element in $W_{\{s_1,\ldots,{s_{n-1}}\}}$. This is in bijection with $W_{\{s_1,\ldots,{s_{n-1}}\}}$, a finite set. Hence the number of possibilities for the deletion sequence of $\widehat{\mathrm{NF}_\Delta(v)}_n$ is bounded above by $|W_{\{s_1,\ldots,{s_{n-1}}\}}|^{\lambda_n(w)+1}$ and therefore finite.
\end{proof}

An immediate consequence of this is that those irreducible Coxeter systems for which $<_\Delta^W$ is Artinian, regardless of choice of ordering $<_S$ on $S$, are exactly those whose proper parabolic subgroups are all finite. Such Coxeter groups are classified, leading to the following theorem.

\begin{theorem}\label{corollary Artinian compact hyperbolic}
Suppose that $W$ is a finitely generated irreducible Coxeter group. Then $<_\Delta^W$ is Artinian for all $<_S$ on $S$ if and only if $(W, S)$ is finite, affine, or compact hyperbolic.
\end{theorem}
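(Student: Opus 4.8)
The plan is to combine Lemma~\ref{Lemma artinian iff finite} with the standard classification of irreducible Coxeter systems all of whose proper parabolic subgroups are finite. The key observation, already flagged in the remark preceding the statement, is the following: $<_\Delta^W$ is Artinian \emph{for every} choice of total order $<_S$ on $S$ if and only if \emph{every} maximal proper standard parabolic subgroup of $W$ is finite. I would first establish this equivalence carefully, and then invoke the classification theorem to identify exactly which irreducible $W$ have this property.

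For the first direction, suppose $<_\Delta^W$ is Artinian for all choices of $<_S$. Fix any generator $s \in S$; I can choose an ordering $<_S$ in which $s$ is the largest element, i.e.\ $s = s_n$. Then Lemma~\ref{Lemma artinian iff finite} forces $W_{S \setminus \{s\}} = W_{\{s_1,\dots,s_{n-1}\}}$ to be finite. Since $s$ was arbitrary, every maximal standard parabolic $W_{S \setminus \{s\}}$ is finite. As $W$ is finitely generated, \emph{every} proper standard parabolic is contained in some maximal one, hence is finite. Conversely, if every maximal proper standard parabolic is finite, then for any ordering $<_S$ whatsoever, the subgroup $W_{\{s_1,\dots,s_{n-1}\}}$ is a maximal proper standard parabolic and so is finite; Lemma~\ref{Lemma artinian iff finite} then yields that $<_\Delta^W$ is Artinian. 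This reduces the theorem to a purely group-theoretic characterization independent of the deletion order.

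The remaining step is to recognize that an irreducible finitely generated Coxeter system $(W,S)$ in which every proper standard parabolic subgroup is finite is precisely one that is finite, affine, or compact hyperbolic. This is a classical fact: a Coxeter system is finite exactly when its Coxeter diagram is of (finite) spherical type; affine exactly when it is irreducible with every proper parabolic finite but $W$ itself infinite and of zero signature; and \emph{compact hyperbolic} is by definition an irreducible system whose proper parabolic subgroups are all finite while $W$ is neither finite nor affine. I would cite the standard reference (Humphreys, or Bourbaki, or Section~6 of \cite{COCG} if available) for the statement that these three families exhaust the irreducible systems all of whose proper parabolics are finite, noting that a finite system trivially has all proper parabolics finite, and that affine and compact hyperbolic are the two infinite families with this minimality property.

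The only real subtlety, and the step I would be most careful about, is the quantifier interplay in the first direction: Artinianity for \emph{all} orders $<_S$ must be converted into finiteness of \emph{every} maximal parabolic, and this works precisely because any given $s$ can be promoted to the top of some admissible order so that Lemma~\ref{Lemma artinian iff finite} applies to the complementary parabolic. No genuinely hard mathematics remains once this translation is in place; the bulk of the work has already been done in Lemma~\ref{Lemma artinian iff finite}, and the classification is imported wholesale. I would therefore write the proof as a short reduction followed by a citation, being explicit that ``compact hyperbolic'' is exactly the terminology used in the literature for the infinite, non-affine irreducible systems with all proper parabolics finite.
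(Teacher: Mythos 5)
Your proposal is correct and follows essentially the same route as the paper: reduce via Lemma~\ref{Lemma artinian iff finite} to the statement that all proper standard parabolic subgroups are finite, then cite the classification in \cite{humphreys} of irreducible Coxeter groups with this property as exactly the finite, affine, and compact hyperbolic ones. The only difference is that you spell out the quantifier argument (promoting an arbitrary $s$ to the top of the order, and passing from maximal parabolics to all proper parabolics) which the paper leaves implicit in the remark preceding the theorem.
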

\begin{proof}
It is shown in \cite{humphreys} that those irreducible Coxeter groups whose proper parabolic subgroups are all finite are exactly the finite, affine, or compact hyperbolic groups.
\end{proof}

\begin{corollary}\label{Lemma Decomposition Formula}
If $<_\Delta^W$ is Artinian,
then $$ L_0(w) = \sum_{i=1}^n L_0(\delta_n(w)). $$ 
\end{corollary}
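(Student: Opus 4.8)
The plan is to argue by induction on the rank $n$, using the lexicographic description of $<_\Delta^W$ recorded in Corollary \ref{corollary Heads and Tails Cox general}. (I read the right-hand side as $\sum_{i=1}^n L_0(\delta_i(w))$, the decomposition of $L_0(w)$ along the coset factorization $w = w_n w_{n-1}\cdots w_1$ with $\delta_i(w)=w_i$.) Write $w' = w_{n-1}\cdots w_1 \in W_{\{s_1,\ldots,s_{n-1}\}}$ for the tail of $w$, so that $w = \delta_n(w)\, w'$ as group elements. The base case $n=1$ is immediate, since then $\delta_1(w)=w$ and the sum has a single term.

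The core of the proof is the one-step identity
$$L_0(w) = L_0(\delta_n(w)) + L_0(w').$$
To establish it I would split $w^\downarrow = \{u\in W \mid u <_\Delta^W w\}$ by Corollary \ref{corollary Heads and Tails Cox general} into the disjoint sets $A = \{u \mid \delta_n(u) <_\Delta^W \delta_n(w)\}$ and $B = \{u \mid \delta_n(u)=\delta_n(w) \text{ and } \delta_{n-1}(u)\cdots\delta_1(u) <_\Delta w'\}$. For $B$, the constraint $\delta_n(u)=\delta_n(w)$ forces $u = \delta_n(w)\,u'$ with $u' = \delta_{n-1}(u)\cdots\delta_1(u)$, and the uniqueness of the coset factorization makes $u\mapsto u'$ a bijection from $B$ onto $\{u'\in W_{\{s_1,\ldots,s_{n-1}\}} \mid u' <_\Delta w'\}$.

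Two observations then pin down the two cardinalities. First, since $\delta_n(w)=w_n$ is a minimal coset representative, all of its reduced words end in $s_n$ (unless $w_n=1$), so $\tau_n(\mathrm{NF}_\Delta(\delta_n(w)))=\mathrm{e}$ and $\delta_n(\delta_n(w))=\delta_n(w)$. Comparing an arbitrary $u$ with $\delta_n(w)$ through Corollary \ref{corollary Heads and Tails Cox general}, the second alternative would demand a tail strictly below the empty word, which is impossible by Lemma \ref{LemmaSubwords}; hence $u <_\Delta^W \delta_n(w)$ iff $\delta_n(u) <_\Delta^W \delta_n(w)$, so $A = \delta_n(w)^\downarrow$ and $|A| = L_0(\delta_n(w))$. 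Second, $L_0$ does not depend on the ambient group for parabolic elements: if $v\in W_{\{s_1,\ldots,s_{n-1}\}}$ then any $u <_\Delta^W v$ has $\lambda_n(\mathrm{NF}_\Delta(u)) \le \lambda_n(\mathrm{NF}_\Delta(v)) = 1$, so $u\in W_{\{s_1,\ldots,s_{n-1}\}}$ too; thus $|B| = L_0(w')$ computed in either group. Summing $|A|$ and $|B|$ yields the one-step identity.

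Finally, I would close the induction by applying the formula inside $W_{\{s_1,\ldots,s_{n-1}\}}$, which is finite by Lemma \ref{Lemma artinian iff finite} and hence trivially Artinian, to expand $L_0(w') = \sum_{i=1}^{n-1} L_0(\delta_i(w'))$, and then use $\delta_i(w')=\delta_i(w)$ for $i<n$ together with the parabolic invariance of $L_0$ just noted. I expect the main obstacle to be the careful handling of the two auxiliary observations — in particular confirming that $\delta_n(w)$ has empty tail (so that $|A|$ is exactly $L_0(\delta_n(w))$ and not merely a product of a count of coset representatives with $|W_{\{s_1,\ldots,s_{n-1}\}}|$), and that a strict descent below a parabolic element never leaves the parabolic subgroup.
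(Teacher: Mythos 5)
Your proof is correct and follows essentially the same route as the paper's: the paper likewise decomposes $w = w_n\cdots w_1$, invokes Lemma \ref{Lemma delta dichotomy} (of which Corollary \ref{corollary Heads and Tails Cox general} is the group-level form), and inducts on $|S|$, leaving implicit exactly the details you supply — the split of $w^\downarrow$ into your sets $A$ and $B$, the empty-tail argument giving $|A| = L_0(\delta_n(w))$, and the parabolic invariance of $L_0$. You also correctly read the misprinted right-hand side as $\sum_{i=1}^n L_0(\delta_i(w))$.
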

\begin{proof}
Let $w\in W$ and write $w = w_n w_{n-1}\ldots w_1 $ in its unique decomposition of the form where each $w_i$ is a minimal left coset representative of $W_{\{s_1,\ldots, s_{i-1}\}}$ in $W_{\{s_1,\ldots, s_i\}}$. By Lemma \ref{Lemma artinian iff finite}, we know that since $<_\Delta^W$ is Artinian, $W_{\{s_1,\ldots, s_{n-1}\}}$ is finite. Therefore, the image, $L_0(W_{\{s_1,\ldots, s_{n-1}\}})$, is finite also and so $L_0(w_{n-1}\ldots w_1) \in \mathbb{N}$. Now applying Lemma \ref{Lemma delta dichotomy} the result follows by induction on $|S|$.
\end{proof}

Corollary \ref{Lemma Decomposition Formula} shows that to compute $L_0(w)$ for all $w \in W$, it suffices to compute the order only for the minimal coset representatives of $W_{\{s_1,\ldots, s_i\}}/W_{\{s_1,\ldots, s_{i-1}\}}$ for $i=1,\ldots,n$. This also hints at a more succinct, recursive reformulation of Algorithm \ref{Algorithm min min} that we do not explore here.

\section{Duality for Coxeter Groups}

In this section, we assume $W$ is a finite Coxeter group and recall that each finite Coxeter group contains a unique, longest element, $\omega_0$, with respect to $\ell$, which is an involution. Abusing our notation of Section \ref{section intro}, we let $\varphi: W \rightarrow \Sym{|W|}$ be the Cayley embedding induced by $<_\Delta$. That is, $(L(u))\varphi(w) = L(uw^{-1})$. If  $L(w) + L(\omega_0w) = |W| + 1$ for all $w \in W$, then we refer to this property as the \emph{duality property}.

We see that understanding exactly which orderings of have the duality property is non-trivial by showing it holds for two families but not for another. Theorem \ref{duality} looks at the deletion order when $W$ is either of type $\mathrm{A}_n$ or $\mathrm{B}_n$ and assumes their Coxeter diagrams are as follows,  with $s_i <_S s_j$ whenever $i<j$.
\\
\begin{center}

\begin{tikzpicture}[scale=0.5]

\draw (-2,0-4) -- (2,0-4);

\draw (6,0-4) -- (8,0-4);

\node (s1) at (-2,-0.6-4) {$s_1$};
\node (s1) at (0,-0.6-4) {$s_2$};

\node (s1) at (2,-0.6-4) {$s_3$};
\node (s1) at (6,-0.6-4) {$s_{n-1}$};
\node (s1) at (8,-0.6-4) {$s_n$};

\filldraw [black] (-2,-0.0-4) circle (5pt);
\filldraw [black] (0,-0.0-4) circle (5pt);
\filldraw [black] (2,-0-4) circle (5pt);
\filldraw [black] (6,0-4) circle (5pt);
\filldraw [black] (8,-0-4) circle (5pt);

\filldraw [black] (3.333,-0-4) circle (1pt);
\filldraw [black] (4.6666,-0-4) circle (1pt);
\filldraw [black] (4,-0-4) circle (1pt);

\draw[double,  double distance=.1cm] (-2,0-4-4) -- (0,0-4-4);
\draw (0,0-4-4) -- (2,0-4-4);

\draw (6,0-4-4) -- (8,0-4-4);

\node (s1) at (-2,-0.6-4-4) {$s_1$};
\node (s1) at (0,-0.6-4-4) {$s_2$};

\node (s1) at (2,-0.6-4-4) {$s_3$};
\node (s1) at (6,-0.6-4-4) {$s_{n-1}$};
\node (s1) at (8,-0.6-4-4) {$s_n$};

\filldraw [black] (-2,-0.0-4-4) circle (5pt);
\filldraw [black] (0,-0.0-4-4) circle (5pt);
\filldraw [black] (2,-0-4-4) circle (5pt);
\filldraw [black] (6,0-4-4) circle (5pt);
\filldraw [black] (8,-0-4-4) circle (5pt);

\filldraw [black] (3.333,-0-4-4) circle (1pt);
\filldraw [black] (4.6666,-0-4-4) circle (1pt);
\filldraw [black] (4,-0-4-4) circle (1pt);
\end{tikzpicture}
\end{center}

Recall that edges in $\mathrm{Cay}(W,S)$ are given by $(w,ws)$ where $w \in W,  s \in S$.

\begin{theorem}\label{duality} Suppose that $W$ is the Coxeter group of either type $\mathrm{A}_n$ or $\mathrm{B}_n$. Then for each $w \in W$
$$L(w) + L(\omega_0w) = |W| + 1.$$
\end{theorem}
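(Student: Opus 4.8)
The plan is to show that left multiplication by $\omega_0$ is an \emph{order-reversing} bijection of $(W,<_\Delta^W)$. Since $L$ is an order isomorphism onto $\{1,\ldots,|W|\}$ (Corollary \ref{Corollary min min equal deletion}), an order-reversing bijection of this chain must send the $k$-th element to the $(|W|+1-k)$-th, which is exactly the claimed identity. I would prove the reversal by induction on the rank $n$, with $n=1$ immediate. Write $W' = W_{\{s_1,\ldots,s_{n-1}\}}$ (of type $\mathrm{A}_{n-1}$, resp.\ $\mathrm{B}_{n-1}$) with longest element $\omega_0'$, let $D$ be the set of minimal-length left coset representatives of $W'$ in $W$, and factor each $w = x\,t$ with head $x \in D$ and tail $t \in W'$. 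By Corollary \ref{corollary Heads and Tails Cox general} the deletion order is lexicographic in this decomposition: the cosets $xW'$ are contiguous blocks of size $|W'|$, ordered by the $<_\Delta^W$-rank of their heads, and within a block the elements are ordered by $<_\Delta^{W'}$ on the tails.

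The engine of the induction is the identity $\omega_0 x = x'\omega_0'$, where $x' := \omega_0 x\omega_0'$. I would first establish, as a general Coxeter fact, that $x' \in D$: for $s$ a generator of $W'$ one has $\omega_0' s = s^\ast \omega_0'$ with $s^\ast$ again a generator of $W'$ (conjugation by $\omega_0'$ permutes the simple reflections of $W'$), and then $\ell(x's) - \ell(x') = \ell(x\omega_0') - \ell(x s^\ast\omega_0')$ by $\ell(\omega_0 y)=\ell(\omega_0)-\ell(y)$; using length-additivity $\ell(xu)=\ell(x)+\ell(u)$ for $x\in D,\ u\in W'$ together with $\ell(s^\ast\omega_0')=\ell(\omega_0')-1$ gives value $+1$, so $\ell(x's)>\ell(x')$ and $x'\in D$. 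Rearranging (as $\omega_0'$ is an involution) yields $\omega_0 x = x'\omega_0'$, whence $\omega_0 w = \omega_0 x\,t = x'(\omega_0' t)$: the image $\omega_0 w$ has head $x'$ and tail $\omega_0' t$. The inductive hypothesis applied to $W'$ says $t \mapsto \omega_0' t$ reverses $<_\Delta^{W'}$, i.e.\ reverses the within-block rank.

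It then remains to show $x \mapsto x'$ reverses the block order, i.e.\ reverses $<_\Delta^W$ on the heads $D$; this is the only place types $\mathrm{A}$ and $\mathrm{B}$ enter, and is where duality fails for other families. For the labellings in the statement the quotient $D$ is a \emph{chain}: it is totally ordered by Bruhat order with exactly one representative of each length $0,1,\ldots,|D|-1$ (for $\mathrm{A}_n$ these are $e,\,s_n,\,s_{n-1}s_n,\,\ldots,\,s_1\cdots s_n$; for $\mathrm{B}_n$ the analogous length chain of $2n$ elements). Since $<_\Delta^W$ refines $<_B$ (Corollary \ref{Lemma Bruhat Less than}), on this chain the deletion order coincides with the length order. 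The length computation above gives $\ell(x') = \ell(\omega_0) - \ell(x) - \ell(\omega_0')$, which at $x=e$ recovers the longest head and in general reads $\ell(x') = \ell(w^{J}) - \ell(x)$; hence $x \mapsto x'$ reverses length and therefore reverses $<_\Delta^W$ on $D$. Combining block reversal with within-block reversal, an element of block-rank $h$ and tail-rank $q$ maps to one of block-rank $|D|+1-h$ and tail-rank $|W'|+1-q$; writing $L(w) = (h-1)|W'| + q$ and simplifying gives $L(\omega_0 w) = |W|+1-L(w)$.

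The main obstacle I anticipate is not the abstract identity $\omega_0 x = x'\omega_0'$, which is general, but verifying cleanly that the minimal coset representatives $D$ form a single $<_\Delta$-chain on which $x\mapsto x'$ is length-reversing: this means pinning down the type-$\mathrm{A}/\mathrm{B}$-specific structure of $D$ (in particular for $\mathrm{B}_n$, where the sign-changing generator must be tracked and $|D| = 2n$) and confirming it is exactly this cominuscule-style chain, since it is precisely this property that can break down for other diagrams.
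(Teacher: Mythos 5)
Your proposal is correct and takes essentially the same route as the paper's proof: induction on the rank, decomposition of each element as (minimal-length coset representative of the maximal parabolic $X = \langle s_1,\ldots,s_{n-1}\rangle$) times (element of $X$), the observation that the deletion order arranges $W$ into contiguous coset blocks of size $|X|$ ordered by the length of their representatives, and the length-reversing action of $\omega_0$ to pair complementary blocks, with the inductive hypothesis handling the within-block reversal. The differences are only in bookkeeping — you establish the block pairing via the conjugation identity $x' = \omega_0 x \omega_0'$ and order the heads using the Bruhat-refinement corollary, where the paper argues through minimal paths in $\mathrm{Cay}(W,S)$ and computes the labels $L(c) = \ell(c)|X| + 1$ directly — but the underlying argument is the same.
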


\begin{proof} We argue by induction on $n$. If $n = 1$, then $W = \langle s_1 \rangle$ with $L(1) = 1, L(s_1) =2$ and $\omega_0 = s_1$, and the theorem clearly holds. Now assume $n > 1$ and put $X = \langle s_1,s_2, \dots, s_{n-1} \rangle$. Let $\omega_0^X$ be the longest element of $X$. Observe that the deletion order on $\mathrm{Cay}(X, \{s_1, \dots,s_{n-1} \})$ coincides with the restriction of the deletion order on $\mathrm{Cay}(W,S)$ to this subgraph. In particular, using $L_X$ for positions in $X$, we have $L(w) = L_X(w)$ for all $w \in X$. Hence, by induction, $L_X(1) + L_X(\omega_0^X) = |X| + 1$, and so $L(\omega_0^X) = |X|$.

Put 
$$\mathcal{C} = \{1, s_n, s_{n-1}s_n,s_{n-2}s_{n-1}s_n, \dots, s_1s_2\cdots s_{n-1}s_n \}$$
if $W$ is of type $\mathrm{A}_n$ and 
$$\mathcal{C} = \{1, s_n, s_{n-1}s_n, \dots, s_1s_2 \cdots s_n,  s_2s_1s_2 \cdots s_n, \dots,s_ns_{n-1}\cdots s_2s_1s_2 \cdots s_{n-1}s_n \}$$
if $W$ is of type $\mathrm{B}_n$. Then $\mathcal{C}$ is a set of left coset representatives for $X$ in $W$ with the property that they are each of minimal length in their $X$ left coset (see 2.2.4 of \cite{GeckPfeiffer}).\\

(\ref{duality}.1) Let $w \in W$ with $w = cx$ where $c \in \mathcal{C}$ and $x \in X$. Then 
$$L(w) + 1 = L(c) + L(x).$$

By Corollary \ref{Lemma Decomposition Formula}
$$L_0(w) = L_0(\delta_n(w)) + \sum_{i=1}^{n-1} L_0(\delta_i(w)).$$
Since $c$ ends in $s_n$ and $x$ has no appearances of $s_n$, $\delta_n(w) = c$ and $x = \delta_{n-1}(w)\cdots \delta_1(w)$. Using Corollary \ref{Lemma Decomposition Formula} again yields
$$L_0(w) = L_0(c) + L_0(x).$$
Therefore 
$$L(w) - 1 = L(c) -1 + L(x) -1 ,$$
giving (\ref{duality}.1).\\

(\ref{duality}.2) Let $c \in \mathcal{C}$. Then $L(c) = \ell(c)|X| + 1$.\\

By Lemma \ref{LemmaSubwords} we have 
$$L(1) < L(s_n) < L(s_{n-1}s_n) < \dots < L(s_1 \dots s_n)$$
in the case $W$ is of type $\mathrm{A}_n$ and 
$$L(1) < L(s_n) < L(s_{n-1}s_n) < \dots < L(s_1 \cdots s_n) < L(s_2s_1s_2 \dots s_n) $$ $$< \dots < L(s_ns_{n-1} \cdots s_2s_1s_2 \cdots s_{n-1}s_n)$$
in the case when $W$ is of type $\mathrm{B}_n$. Since $s_i < s_n$ for all $i \ne n$, the definition of the deletion order means that the elements of $X$ will be labelled $1 = L(1)$ to $|X| = L(\omega^X_0)$. Moreover, by (\ref{duality}.1), the elements in $\mathcal{C}$ will have the least label in each of their $X$-left cosets. So $s_n$ will have the least label in $W \setminus X$. Therefore $L(s_n) = |X| + 1$. Then using (\ref{duality}.1) $L(s_n\omega_0^X) = 2|X|$, the largest label in $X \cup s_nX$. Now $s_{n-1}s_n$ will have the least label in $W \setminus (X \cup s_nX)$ which gives $L(s_{n-1}s_n) = 2|X| + 1$. Continuing in this fashion yields (\ref{duality}.2).\\

Combining (\ref{duality}.1) and (\ref{duality}.2) gives \\

(\ref{duality}.3) If $w = cx \in W$ with $c \in \mathcal{C}$ and $x \in X$, then
$$L(w) = \ell(c)|X| + L(x).$$

(\ref{duality}.4) The left multiplication action by $\omega_0$ of the left cosets of $X$ interchanges $c_1X$ and $c_2X$ where $c_1, c_2 \in \mathcal{C}$ are such that $\ell(c_1) + \ell(c_2) = n$, when $W$ is of type $\mathrm{A}_n,$ and $\ell(c_1) + \ell(c_2) = 2n - 1$, when $W$ is of type $\mathrm{B}_n$.\\

By the length reversing property of $\omega_0$ (see 1.8 of \cite{humphreys}) $\omega_0$ must interchange $c_1$ and $c_2\omega^X$ and interchange $c_2$ and $c_1\omega_0^X$. This yields (\ref{duality}.4).\\

(\ref{duality}.5) Let $w \in W$. If $s_{i_1} \cdots s_{i_t}$ is a minimal path in $\mathrm{Cay}(W,S)$ such that $ws_{i_1} \cdots s_{i_t} = \omega_0$, then $\omega _0w = s_{i_t} \cdots s_{i_1}$.\\

This follows easily as the elements of $S$ are involutions.\\

We note that this means $\omega_0w$ can be located in $\mathrm{Cay}(W,S)$ 
by starting at $1$ and following the edges labelled, in order, $s_{i_t}$, then 
$s_{i_{t-1}}$ and so on ending with $s_{i_1}$.

So now letting $w \in c_1X$ and $\omega_0w \in c_2X$ we show that $L(w) + L(\omega_0 w) = |W| + 1$. Let $u_1, u_2, u_3$ be minimal paths in $\mathrm{Cay}(W,S)$ such that $c_1u_1 = w, wu_2 = c_1\omega_0^X$ (= the longest element in $c_1X$) and $c_1\omega_0^Xu_3 = w$. Observe that, as elements of $W$, $u_1, u_2 \in X$. Also by (\ref{duality}.5) $\omega_0w = u_3^{-1}u_2^{-1}$. Since $\omega_0 c_1\omega_0^X = c_2$ we have $$\omega_0 w = c_2u_2^{-1}.$$ Further $w = c_1u_1$. Now looking in $X$ and using (\ref{duality}.5)  there,  $\omega_0^Xu_1 = u_2^{-1}.$ By induction
$$L(u_1) + L(u_2^{-1}) = L(u_1) + L(\omega_0^Xu_1) = |X| + 1.$$
We have 
$$L(w) + L(\omega_0 w) = L(c_1u_1) + L(c_2u_2^{-1}),$$
and using (\ref{duality}.1) this gives

$$L(w) + L(\omega_0 w) = L(c_1) + L(u_1) + L(c_2) + L(u_2^{-1}) - 2.$$
When $W$ is of type $\mathrm{A}_n$, employing (\ref{duality}.2) and (\ref{duality}.4) yields
\begin{align*}
L(w) + L(\omega_0 w)&= \ell(c_1)|X| +L(u_1) + \ell(c_2)|X| + L(u_2^{-1})\\
&=\ell(c_1)|X| + (n - \ell(c_1))|X| + L(u_1) + L(u_2^{-1})\\
&= n|X| + |X| + 1\\
&= (n + 1)|X| + 1 = |W| +1,\\
\end{align*}
which establishes the theorem in that case. While when $W$ is of type $\mathrm{B}_n$
\begin{align*}
L(w) + L(\omega_0 w) & = \ell(c_1)|X| + (2n - 1 - \ell(c_1))|X| + |X| + 1\\
&= (2n - 1)|X| + |X| + 1\\
&=2n|X| +1 = |W| + 1,\\
\end{align*}
and completes the proof of Theorem \ref{duality}

\end{proof}

The action of $\omega_0$ on the left cosets of $X$ in the proof of Theorem \ref{duality} is clearly important and is, in fact, implicated in the failure of the duality property for other Coxeter groups. We see an example of this  for some ordering on type D Coxeter groups.

\begin{example}
Let $W = \langle s_i | i = 1,\ldots, 5\rangle$ be the Coxeter group of type $\mathrm{D}_5$ with Coxeter Diagram labelled

\begin{center}

\begin{tikzpicture}
\draw (2,0-4-4) -- (6,0-4-4);
\draw (2,0-4-4) -- (0,1-4-4);
\draw (2,0-4-4) -- (0,-1-4-4);

\node (s1) at (0,1-4-4-0.6) {$s_1$};
\node (s1) at (0,-1-4-4-0.6) {$s_3$};
\node (s1) at (2,-0.6-4-4) {$s_2$};
\node (s1) at (4,-0.6-4-4) {$s_4$};
\node (s1) at (6,-0.6-4-4) {$s_5$};

\filldraw [black] (0,1-4-4) circle (5pt);
\filldraw [black] (0,-1-4-4) circle (5pt);
\filldraw [black] (2,-0-4-4) circle (5pt);
\filldraw [black] (4,-0-4-4) circle (5pt);
\filldraw [black] (6,-0-4-4) circle (5pt);

\end{tikzpicture}
\end{center}
Again we employ the total order $<$ on the alphabet $S = \{s_i | i =1,\ldots,5\}$ where $s_i < s_j$ if and only if $i<j$. Put $X = \langle s_1,s_2,s_3,s_4 \rangle$. Then 

$$\mathcal{C} = \{1,s_5,s_4s_5,s_2s_4s_5,s_1s_2s_4s_5,s_3s_2s_4s_5,s_1s_3s_2s_4s_5,s_2s_1s_3s_2s_4s_5,$$ $$s_4s_2s_1s_3s_2s_4s_5,s_5s_4s_2s_1s_3s_2s_4s_5\} $$
is the set of minimal length left coset representatives for $X$ in $W$. Now $\omega_0$ fixes both of $s_1s_2s_4s_5X$ and $s_3s_2s_4s_5X$. Hence, by length considerations, $\omega_0s_1s_2s_4s_5$ is the longest element in $s_1s_2s_4s_5X$. Therefore 

\begin{align*}
L(\omega_0s_1s_2s_4s_5) &= 
L(s_1s_2s_4s_5) + |X|-1 \\
&= 4|X|+1+|X|-1 \\
&= 5|X|.\\
\intertext{Consequently}
L(s_1s_2s_4s_5) + 
L(\omega_0s_1s_2s_4s_5) &= 4|X|+1+5|X| \\
&= 9|X| + 1,
\end{align*}
whereas $|W|+1 = 10|X|+1.$
\end{example}

Finally we observe that the duality property holds for dihedral groups.  Let $W= \mathrm{I}_2(m)$
with $S = \{ s_1,s_2 \}$ and $s_1 < s_2$. Following the line of argument as in Theorem \ref{duality}, examining the left cosets of $\langle s_1 \rangle$ we determine $L(w), w \in W$. From this we then observe that the duality property holds.

\end{document}